\documentclass[12pt,twoside,a4paper]{article}
\setlength{\textwidth}{157mm}
\setlength{\textheight}{236mm}
\setlength{\oddsidemargin}{2mm}
\setlength{\evensidemargin}{2mm}
\setlength{\topmargin}{-2mm}
\setlength{\skip\footins}{6mm plus 2mm}
%
\usepackage{times}
\usepackage{bm}
\usepackage{braket}
\usepackage{graphicx}
\usepackage{theorem}
\usepackage{amsmath}
\usepackage{amssymb,mathrsfs}
\usepackage{latexsym}
\usepackage[numbers]{natbib}
\usepackage{algorithm}
\usepackage{algpseudocode}
\usepackage{caption}
\captionsetup[algorithm]{labelsep=colon}

\newfloat{procedure}{H}{loa}
\floatname{procedure}{Procedure} 
\captionsetup[procedure]{labelsep=colon}
\usepackage[flushmargin]{footmisc}

\theorembodyfont{\itshape}
\newtheorem{thm}{Theorem}[section]
\newtheorem{lem}{Lemma}[section]
\newtheorem{prop}{Proposition}[section]
\newtheorem{coro}{Corollary}[section]

\theorembodyfont{\rmfamily}
{\bf}{\rm}
{\bf}{\rm}

\newtheorem{hm-cond}{Condition}
\newtheorem{rem}{Remark}[section]{\itshape}{\rmfamily}
{\itshape}{\rmfamily}
\newenvironment{proof}{\noindent{\it Proof.~~}}{\medskip}

%
\makeatletter
\def\eqnarray{\stepcounter{equation}\let\@currentlabel=\theequation
\global\@eqnswtrue
\global\@eqcnt\z@\tabskip\@centering\let\\=\@eqncr
$$\halign to \displaywidth\bgroup\@eqnsel\hskip\@centering
  $\displaystyle\tabskip\z@{##}$&\global\@eqcnt\@ne 
  \hfil$\;{##}\;$\hfil
  &\global\@eqcnt\tw@ $\displaystyle\tabskip\z@{##}$\hfil 
   \tabskip\@centering&\llap{##}\tabskip\z@\cr}
\makeatother
%
\makeatletter
    \renewcommand{\theequation}{%
    \thesection.\arabic{equation}}
    \@addtoreset{equation}{section}
  \makeatother
\makeatletter
\def\Left#1#2\Right{\begingroup%
   \def\ts@r{\nulldelimiterspace=0pt \mathsurround=0pt}%
   \let\@hat=#1%
   \def\sht@im{#2}%
   \def\@t{{\mathchoice{\def\@fen{\displaystyle}\k@fel}%
          {\def\@fen{\textstyle}\k@fel}%
          {\def\@fen{\scriptstyle}\k@fel}%
          {\def\@fen{\scriptscriptstyle}\k@fel}}}%
   \def\g@rin{\ts@r\left\@hat\vphantom{\sht@im}\right.}%
   \def\k@fel{\setbox0=\hbox{$\@fen\g@rin$}\hbox{%
      $\@fen \kern.3875\wd0 \copy0 \kern-.3875\wd0%
      \llap{\copy0}\kern.3875\wd0$}}%
      \def\pt@h{\mathopen\@t}\pt@h\sht@im%
      \Right}%
\def\Right#1{\let\@hat=#1%
   \def\st@m{\mathclose\@t}%
   \st@m\endgroup}
\makeatother


\newcommand{\vc}{\bm}
%
%
\makeatletter
\DeclareRobustCommand\widecheck[1]{{\mathpalette\@widecheck{#1}}}
\def\@widecheck#1#2{%
    \setbox\z@\hbox{\m@th$#1#2$}%
    \setbox\tw@\hbox{\m@th$#1%
       \widehat{%
          \vrule\@width\z@\@height\ht\z@
          \vrule\@height\z@\@width\wd\z@}$}%
    \dp\tw@-\ht\z@
    \@tempdima\ht\z@ \advance\@tempdima2\ht\tw@ \divide\@tempdima\thr@@
    \setbox\tw@\hbox{%
       \raise\@tempdima\hbox{\scalebox{1}[-1]{\lower\@tempdima\box
\tw@}}}%
    {\ooalign{\box\tw@ \cr \box\z@}}}
\makeatother


\newcommand{\ol}{\overline}

\newcommand{\wt}{\widetilde}
\newcommand{\wh}{\widehat}
\newcommand{\bv}{\breve}


%
\newcommand{\vertiii}[1]%
{{\left\vert\kern-0.25ex\left\vert\kern-0.25ex\left\vert #1 
 \right\vert\kern-0.25ex\right\vert\kern-0.25ex\right\vert}}
\newcommand{\down}[2]{\smash{\lower#1\hbox{#2}}}
\newcommand{\up}[2]{\smash{\lower-#1\hbox{#2}}}

\newcommand{\dm}{\displaystyle}
\newcommand{\qed}{\hspace*{\fill}$\Box$}



\newcommand{\vmin}{\wedge}

\newcommand{\EE}{\mathsf{E}}
\newcommand{\PP}{\mathsf{P}}

\newcommand{\one}{\mbox{$1$}\hspace{-0.25em}{\rm l}}


\newcommand{\calB}{\mathcal{B}}

\newcommand{\calD}{\mathcal{D}}



\newcommand{\bbA}{\mathbb{A}}

\newcommand{\bbC}{\mathbb{C}}

\newcommand{\bbM}{\mathbb{M}}

\newcommand{\bbR}{\mathbb{R}}

\newcommand{\bbX}{\mathbb{X}}

\newcommand{\bbZ}{\mathbb{Z}}
\newcommand{\scA}{\mathscr{A}}
\newcommand{\scB}{\mathscr{B}}


\DeclareMathOperator*{\argmax}{arg\,max}


\newcommand{\re}{{\rm e}}

\newcommand{\varep}{\varepsilon}


\newcommand{\dd}[1]{\if#11 1\!\!1 
\else {\if#1C I\!\!\!C
\else {\if#1G I\!\!\!G 
\else {\if#1J J\!\!\!J 
\else {\if#1S S\!\!\!S
\else {\if#1Z Z\!\!\!Z
\else {\if#1Q O\!\!\!\!Q
\else I\!\!#1
\fi}
\fi}
\fi}
\fi} 
\fi} 
\fi} 
\fi} 

\pagestyle{myheadings} 
\markboth{\small H. Masuyama}
{The Poisson equation for general state-space Markov chains}

\makeatother


\begin{document}\thispagestyle{empty} 

\phantom{}\vspace{-10mm}

\hfill

{\Large{\bf
\begin{center}
Specific bounds for a probabilistically interpretable solution of the Poisson equation for general state-space Markov chains with queueing applications%
\footnote[1]{%
Submitted to Applied Probability Trust
}
\if0
\footnote[2]{
This research was supported in part by JSPS KAKENHI Grant Numbers JP18K11181.
}
\fi
%
%
\end{center}
}
}

\begin{center}
{
Hiroyuki Masuyama%
\footnote[2]{E-mail: masuyama@sys.i.kyoto-u.ac.jp}
}

\medskip

{\small
Department of Systems
Science, Graduate School of Informatics, Kyoto University\\
Kyoto 606-8501, Japan
}

\bigskip
\medskip

{\small
\textbf{Abstract}

\medskip

\begin{tabular}{p{0.85\textwidth}}
This paper considers the Poisson equation for general state-space Markov chains in continuous time. The main purpose of this paper is to present  specific bounds for the solutions of the Poisson equation for general state-space Markov chains. The solutions of the Poisson equation are unique in the sense that they are expressed in terms of a certain probabilistically interpretable solution (called the {\it standard solution}). Thus, we establish some specific bounds for the standard solution under the $f$-modulated drift condition (which is a kind of Foster-Lyapunov-type condition) and some moderate conditions. To demonstrate the applicability of our results, we consider the workload processes in two queues: MAP/GI/1 queue, and M/GI/1 queue with workload capacity limit.
\end{tabular}
}
\end{center}

\begin{center}
\begin{tabular}{p{0.90\textwidth}}
{\small
{\bf Keywords:} %
Poisson equation;
General state-space Markov chains;
$f$-modulated drift condition;
Computable bounds;
MAP/GI/1 queue;
M/GI/1 queue with workload capacity limit
%
%

\medskip

{\bf Mathematics Subject Classification:} %
60J25; 60K25
}
\end{tabular}

\end{center}


\section{Introduction}\label{sec-intro}

In this paper, we consider an ergodic continuous-time Markov chain $\{X(t);t \in \bbR_+:=[0,\infty)\}$ with a topological state space $\bbX$ and extended generator $\scA$ (which is formally defined in the next section). Let $\{P^t;t\in\bbR_+\}$ denote the transition semigroup of the Markov chain $\{X(t)\}$, i.e.,
\begin{equation*}
P^t(x,\bbA) = \PP_x(X(t) \in \bbA),
\quad
t\in\bbR_+,\ x \in\bbX,\ \bbA \in \calB(\bbX),
\end{equation*}
where $\PP_x(\,\cdot\,)=\PP(\,\cdot\, |\, X(0)=x)$ and $\calB(\bbX)$ denotes the Borel $\sigma$-field on $\bbX$. For later use, we introduce some conventions. Let $\bbR$ denote the set of all real numbers. For any function $f:\bbX\to\bbR$, let $|f|$ denote a function $\bbX\to\bbR_+$ such that $|f|(x)=|f(x)|$ for all $x \in \bbX$. Let $\braket{\nu,f} = \int_{x\in\bbX}\nu(dx) f(x)$ for any measure $\nu$ on $\calB(\bbX)$ and any real Borel (measurable) function $f$ on $\bbX$, . 

In this paper, we consider the Poisson equation for the Markov chain $\{X(t)\}$:
\begin{equation}
-\scA h = g - \braket{\pi,  g},
\label{Poisson-EQ}
\end{equation}
where $g:\bbX \to \bbR$ is a given Borel function, and where $\pi$ is the invariant probability measure of $\{X(t)\}$. 
Poisson equation (\ref{Poisson-EQ}) and its variants appear in various studies on Markov chains \cite{Mako02}, such as the functional central limit theorem~(\cite{Glyn96}, \cite[Section 17.4]{Meyn09}), stochastic approximation algorithms~\cite{Mako92,Meti84}, perturbation analysis \cite{Cao97,Cao98,LiuYuan15}, and augmented truncation approximation~\cite{LiuYuan18-AAP,LLM2018,Masu17-JORSJ}. 

We now suppose that $|\braket{\pi,  g}| < \infty$. We then define $h^{(g)}$ as a function $\bbX\to\bbR$ such that
\begin{equation}
h^{(g)}(x) 
= \EE_x \! \left[ \int_0^{\tau_{\alpha}} g(X(t)) dt\right]
- \braket{\pi,  g} \EE_x[\tau_{\alpha}],\qquad x \in \bbX,
\label{defn-h^{(g)}}
\end{equation}
where $\tau_{\alpha}:=\inf\{t > 0: X(t) \in \alpha, X(t-) \not\in \alpha\}$ is the first return time to an {\it atom} $\alpha \in \calB(\bbX)$ (see Condition~\ref{cond-atom} below). The function $h^{(g)}$ is a solution of Poisson equation (\ref{Poisson-EQ}), which follows from Proposition~\ref{prop-h^{(g)}} below and \cite[Theorem~3.1]{Asmu94-QUESTA}. 

It is known \cite[Proposition 1.1]{Glyn96} that if 
$h$ is a solution of (\ref{Poisson-EQ}) and $\braket{\pi,  |h| + |h^{(g)}|} < \infty$ then, for any $c \in \bbR$,
\[
h(x) = h^{(g)}(x)  + c \quad \mbox{for $\pi$-almost everywhere $x \in \bbX$}.
\]
Therefore, the solutions of Poisson equation (\ref{Poisson-EQ}) are unique except the constant term if they are 
absolutely integrable solutions with respect to $\pi$. In addition, according to (\ref{defn-h^{(g)}}), $h^{(g)}$ is probabilistically interpretable and thus is tractable. From these reasons, we focus on the solution $h^{(g)}$ hereafter and, for convenience, we refer to it as the {\it standard solution} of Poisson equation (\ref{Poisson-EQ}).

Some researchers studied the standard solutions of the Poisson equations for structured Markov chains with countable state spaces. 
Dendievel et al.~\cite{Dend13}
derive computable results on the standard solution of the Poisson equation for quasi-birth-and-death processes (QBDs). Liu et al.~\cite{LiuYuan14} extend the results of \cite{Dend13} to GI/M/1-type Markov chains. Furthermore, Bini et al.~\cite{Bini16} discuss a general solution of the Poisson equation for QBDs. 

There are a few studies on the case of uncountable state spaces. Glynn~\cite{Glyn94} derive the standard solution of the Poisson equation for the waiting time sequence of the M/GI/1 queue. Asmussen and Baldt~\cite{Asmu94-QUESTA} extend Glynn~\cite{Glyn94}'s results to the workload process in a single-server queue with a Markovian arrival process (MAP; see \cite{Luca90}) and state-dependent service times, which is a generalization of the MAP/GI/1 queue considered in \cite{Luca90}. However, in general, the uncountability of state spaces leads to a difficulty in computing the standard solutions.

The main purpose of this paper is to present specific and tractable bounds for the solutions of Poisson equation (\ref{Poisson-EQ}) in the general setting. To this end, we assume the $f$-modulated drift condition (which is a kind of Foster-Lyapunov-type condition).
\begin{hm-cond}[$f$-modulated drift condition]\label{cond-drift}
For a given Borel function $f:\bbX \to (0,\infty)$ with $\inf_{x\in\bbX}f(x) > 0$, there exist some $b \in (0,\infty)$, {\it closed small set} $\bbC \subseteq
\bbX$ (see Remark~\ref{rem-small-set} below), and an extended-valued nonnegative function
$V$ on $\bbX$ satisfying $V(x_*) < \infty$ for some $x_* \in \bbX$, such that
\begin{equation}
\scA V \le  - f + b 1_{\bbC},
\label{ineqn-Qv-general}
\end{equation}
where, for any set $\bbA \subseteq \bbX$, a function
$1_{\bbA}: \bbX \to \{0,1\}$ is defined as
\[
1_{\bbA}(x)
=\left\{
\begin{array}{ll}
1, & \qquad x \in \bbA,
\\
0, & \qquad x \in \bbX \setminus \bbA.
\end{array}
\right.
\]
\end{hm-cond}

\begin{rem}\label{rem-small-set}
A set $\bbC \subseteq \bbX$ is said to be {\it small} if there exist some constant $T > 0$ and nonnegative measure $\nu$ on $(\bbX,\calB(\bbX))$ such that $\nu(\bbX) > 0$ and
\begin{equation}
P^T(x,\bbA) \ge \nu(A) \quad \mbox{for all $x \in \bbX$ and $A \in \calB(\bbX)$.}
\label{cond-small-set}
\end{equation}
Furthermore, suppose that (\ref{cond-small-set}) holds, and let $\mathfrak{m}$ denote a measure on $(\bbR_+,\calB(\bbR_+))$ such that $\calB(\{T\}) = 1$ and $\calB(\bbR_+) < \infty$. It then follows from (\ref{cond-small-set}) that
\[
\int_{t \in \bbR_+} \mathfrak{m}(dt) P^t(x,\bbA) 
\ge  P^T(x,\bbA) \ge \nu(A),\qquad \bbA \in \calB(\bbX).
\]
which shows that the small set $\bbC$ is an {\it $\mathfrak{m}$-petite set} (see, e.g., \cite[Section~4]{Meyn93-II}).
\end{rem}

\begin{rem}\label{rem-f-drift}
Suppose that $\{X(t)\}$ is non-explosive and $\psi$-irreducible.
If Condition~\ref{cond-drift} holds, then $\{X(t)\}$ is positive Harris recurrent and its invariant probability measure $\pi$ is unique (see \cite[Theorem~7]{Meyn93-proc}). Furthermore, $\pi$ satisfies
\begin{equation}
\pi \scA = 0,
\label{eqn-pi*A=0}
\end{equation}
which is proved in Appendix~\ref{appen-proof-eqn-pi*A=0}.
\end{rem}

Under Condition~\ref{cond-drift}, 
Glynn and Meyn~\cite{Glyn96} prove that Poisson equation (\ref{Poisson-EQ}) has a solution $h$ such that, for some $c_0 > 0$ and any $|g| \le f$, 
\begin{equation*}
|h| \le c_0 (V+1),
\end{equation*}
where the constant $c_0$ is not specified (see Theorem~3.2 therein).
Masuyama~\cite{Masu17-JORSJ} provides a procedure for computing such a constant, though the state space $\bbX$ is assumed to be countable.

In this paper, we derive specific bounds for the standard solution $h^{(g)}$ on the general space $\bbX$, though we need some additional conditions. We assume (see Condition~\ref{cond-petite} and Lemma~\ref{lem-xi_T} below) that for some $T > 0$ there exists a constant $\xi_T \in (0,1)$ such that 
\begin{eqnarray}
&&\inf_{x \in \bbC}P^T(x,\alpha) 
\ge \xi_T.
\label{cond-petite-01}
\end{eqnarray}
Under this condition, we show that
\begin{equation}
|h^{(g)}| 
\le \left(1 + { |\braket{\pi,  g}| \over \inf_{y \in \bbX} f(y)}  \right)
\left(
V_0 + { b T \over \xi_T}
\right)
\quad \mbox{for all $|g| \le f$,}
\label{bound-h-general}
\end{equation}
where 
\begin{equation}
V_0 = V - \inf_{y\in\alpha}V(y).
\label{defn-V_0}
\end{equation}

We now  note that the bound (\ref{bound-h-general}) requires $|g| \le f$, though this does not cause any restriction on its applicability. Indeed, it follows from (\ref{Poisson-EQ}) and (\ref{defn-h^{(g)}}) that
\begin{equation}
ch^{(g)} = h^{(cg)},\qquad c > 0,
\label{eqn-ch^{(g)}}
\end{equation}
that is, $ch^{(g)}$ is the standard solution of the following Poisson equation:
\[
\scA h = cg - \braket{\pi,  cg}.
\]
It also follows from (\ref{ineqn-Qv-general}) that
\[
\scA (cV) \le  - cf + cb 1_{\bbC}.
\]
Therefore, (\ref{bound-h-general}) implies that, for all $|g| \le f$ and $c>0$,
\begin{eqnarray*}
|h^{(cg)}| 
&\le& \left(1 + { |\braket{\pi,  cg}| \over c\inf_{y \in \bbX} f(y)}  \right)
\left(
cV_0 + { cb T \over \xi_T}
\right)
\nonumber
\\
&=& c\left(1 + { |\braket{\pi,  g}| \over \inf_{y \in \bbX} f(y)}  \right)
\left(
V_0 + { b T \over \xi_T}
\right).
\end{eqnarray*}
Combining this and (\ref{eqn-ch^{(g)}}) yields
\begin{eqnarray*}
|h^{(g)}| 
&\le& c\left(1 + { |\braket{\pi,  g}| \over c\inf_{y \in \bbX} f(y)}  \right)
\left(
V_0 + { b T \over \xi_T}
\right)\quad \mbox{for all $|g| \le cf$ and $c>0$}.~~
\end{eqnarray*}

Finally, we remark that if the small set $\bbC$ is finite then there exists a pair $(T,\xi_T)$ satisfying (\ref{cond-petite-01}) (which is proved in Lemma~\ref{lem-petite} below). Thus, we can readily find such a pair $(T,\xi_T)$ for specific Markov chains associated with familiar queueing models, such as M/GI/1 and MAP/GI/1 queues. Indeed, to demonstrate the applicability of our bounds, we apply them to the workload processes in two queues: a MAP/GI/1 queue; and an M/GI/1 queue with workload capacity limit (WCL), where the capacity can be infinite. For the first queue, we derive a computable bound for the standard solution to the Poisson equation of the workload process. For the second queue, we consider the workload processes of the finite and infinite models (the latter one is equivalent to an ordinary M/GI/1 queue), and establish an explicit bound for the difference between the stationary distributions of the two models.

The rest of this paper is divided into three sections. Section~\ref{sec-main-results} presents the main results of this paper. Sections~\ref{sec-MAP-GI-1} and \ref{sec-M-GI-1} applies them to the queueing examples.

\section{Main results}\label{sec-main-results}

This section presents the main results of this paper. We first introduce the formal definitions of the Markov chain $\{X(t);t\in\bbR_ \}$ and required notation together with technical conditions. We then present bounds for the standard solution $h^{(g)}$, given in (\ref{defn-h^{(g)}}), of Poisson equation (\ref{Poisson-EQ}). 

Let $\{X(t);t\in\bbR_+\}$ denote a continuous-time Markov chain on a Polish space $\bbX$ equipped with its Borel $\sigma$-field $\calB(\bbX)$. 
We then assume that $\{X(t)\}$ is a non-explosive Borel right process with the transition semigroup $\{P^t\}$ and thus it is strongly Markovian with right-continuous sample paths (see, e.g., \cite[pages 67--68 and Theorem~3.2.1]{Marc06}). We also assume that $\{X(t)\}$ is $\psi$-irreducible (see, e.g., \cite[Section 20.3.1]{Meyn09}); that it, the $\psi$-irreducibility of $\{X(t)\}$ is equivalent to
\[
\psi(\bbA) > 0 ~\Longrightarrow~ 
\EE_x \! \left[ \int_0^{\infty} \one(X(t) \in \bbA) dt \right] > 0
\quad \mbox{for all $x \in \bbX$,}
\]
where $\EE_x[\,\cdot\,]=\EE[\,\cdot\, |\, X(0)=x]$ and $\one(\,\cdot\,)$ denotes the indicator function. 

Let $\scB$ denote a Banach space that consists of real Borel functions $F$'s on $\bbX$ such that $\int_{x\in\bbX} |F(x)| \varphi(dx) < \infty$ for some probability measure $\varphi$ on $\calB(\bbX)$. 
Let $\calD$ denote the set of functions $V$'s in $\scB$ such that, for each $V \in \scB$, there exists a Borel function $U:\bbX\to\bbR$ that satisfies the following (see \cite[Section 1.3]{Meyn93-III}): For any  initial condition on $X(0)$,
\begin{equation*}
M(t) := V(X(t)) - \int_0^t U(X(u)) du,\qquad t \in \bbR_+,
\end{equation*}
is a {\it local martingale} (see, e.g., \cite{Kont16} and \cite[Section 26]{Davi93}). We then write $\scA V = U$ and refer to the operator $\scA$ as 
the {\it extended generator} of the $\psi$-irreducible Markov chain $\{X(t)\}$.

\begin{rem}
According to the definition of $\scA$, there exists an increasing sequence of stopping times, $\{s_m; m\in\bbZ_+\}$, such that $\lim_{m\to\infty}s_m = \infty$ with probability one and, for $t \in \bbR_+$ and $ m \in \bbZ_+$,
\begin{eqnarray}
\EE_x[V(X(t \vmin s_m))] 
= V(x) + \EE_x \! \left[ \int_0^{t \vmin s_m} \scA V(X(u)) du \right],
\qquad \forall V \in \calD.
\label{defn-extended-generator}
\end{eqnarray}
where $x \vmin y = \min(x,y)$ for $x,y\in\bbR$.
\end{rem}

\begin{rem}\label{rem-weak-generator}
Let $\wt{\scA}$ denote a linear operator such that
\begin{equation}
\wt{\scA} V(x) = \lim_{t \downarrow 0} {P^t V(x) - V(x) \over t},
\qquad x \in \bbX,
\label{defn-wt{A}}
\end{equation}
on
\[
\wt{\calD} = \{V \in \scB: \mbox{the limit in (\ref{defn-wt{A}}) exists for each $x \in \bbX$}\}.
\]
The operator $\wt{\scA}$ is referred to the {\it weak generator} of $\{X(t)\}$ (see, e.g., \cite[Chapter 1, Section 6]{Dynk65}). It follows from Fubini's theorem and Dynkin's formula (see \cite[Proposition 1.5]{Ethi05} and \cite[Equation (8)]{Meyn93-III}) that, for $t \in \bbR_+$,
\[
\EE_x[V(X(t))] 
= V(x) + \EE_x \! \left[ \int_0^{t} \wt{\scA} V(X(u)) du \right],
\qquad \forall V \in \wt{\calD},
\]
Therefore, the optional sampling (stopping) theorem (see, e.g., \cite[Section 5.3]{Lawl06}) yields, for $t \in \bbR_+$ and $ m \in \bbZ_+$,
\begin{eqnarray*}
\EE_x[V(X(t \vmin s_m))] 
= V(x) + \EE_x \! \left[ \int_0^{t \vmin s_m} \wt{\scA} V(X(u)) du \right],
\qquad \forall V \in \wt{\calD}.
\end{eqnarray*}
This equation together with (\ref{defn-extended-generator}) implies that $\wt{\calD} \subset \calD$ and 
\begin{equation}
\wt{\scA}V = \scA V,\qquad \forall V \in \wt{\calD}.
\label{eqn-wt{A}=A}
\end{equation}
\end{rem}

\medskip

We now make the following condition, which is necessary for the definition of the standard solution $h^{(g)}$.
\begin{hm-cond}\label{cond-atom}
There exists a set $\alpha \in \calB(\bbX)$ such that $\psi(\alpha) > 0$ and, for all $t>0$,
\begin{equation*}
P^t(x,\bbA) = \nu_t(\bbA),
\qquad x \in \alpha,\ \bbA\in\calB(\bbX),
\end{equation*}
where, for each $t > 0$, $\nu_t$ is a probability measure on $\calB(\bbX)$. The set $\alpha$ is referred to as an {\it (accessible) atom} (see, e.g., \cite[Chapter 5]{Meyn09}).
\end{hm-cond}

\begin{prop}\label{prop-h^{(g)}}
If Conditions~\ref{cond-drift} and \ref{cond-atom} hold, then 
(i) the Markov chain $\{X(t)\}$ is a regenerative process (see, e.g., \cite[Chapter VI]{Asmu03}) such that the return times to atom $\alpha$ are regeneration points; and (ii) the function $h^{(g)}$, given in (\ref{defn-h^{(g)}}), satisfies
\begin{eqnarray}
h^{(g)}(x) &=& 0,\qquad x \in \alpha,
\label{eqn-h^{(g)}(alpha)=0}
\\
h^{(g)}(x) 
&=& \EE_x \! \left[ \int_0^{\wt{\tau}_{\alpha}} g(X(t)) dt\right]
- \braket{\pi,  g} \EE_x[\wt{\tau}_{\alpha}],\qquad x \in \bbX,
\label{eqn-h^{(g)}(x)}
\end{eqnarray}
where $\wt{\tau}_{\alpha}:=\inf\{t \in \bbR_+: X(t) \in \alpha\}$ is the first hitting time to atom $\alpha$.
\end{prop}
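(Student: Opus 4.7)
The plan is to establish (i) from the atom property together with the strong Markov property, and then to deduce (ii) by combining the resulting regenerative structure with the renewal--reward representation of $\pi$. For (i), Condition~\ref{cond-atom} asserts that $P^t(x,\cdot)=\nu_t(\cdot)$ does not depend on $x\in\alpha$. Since $\{X(t)\}$ is a Borel right process, it is strongly Markovian, and $\tau_\alpha$ is an $\{\calF_t\}$--stopping time; moreover Condition~\ref{cond-drift} together with Remark~\ref{rem-f-drift} makes $\{X(t)\}$ positive Harris recurrent, so $\tau_\alpha<\infty$ almost surely under $\PP_x$ for every $x\in\bbX$. Applying the strong Markov property at successive return times to $\alpha$ and using that the post-hitting law depends on $X(\tau_\alpha)\in\alpha$ only through the common measure $\nu_t$, the cycles between consecutive returns are seen to be i.i.d., which is exactly the regenerative structure claimed in (i).

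For (ii), I would invoke the renewal--reward theorem for the regenerative process $\{X(t)\}$ (see, e.g., \cite[Chapter VI]{Asmu03}) to obtain the Kac--type identity
\begin{equation*}
\braket{\pi,g}
= \frac{\EE_x\!\left[\int_0^{\tau_\alpha}g(X(t))\,dt\right]}{\EE_x[\tau_\alpha]},
\qquad x\in\alpha,
\end{equation*}
where both expectations are finite and, by the atom property, independent of $x\in\alpha$. Substituting this identity into the definition (\ref{defn-h^{(g)}}) of $h^{(g)}$ yields (\ref{eqn-h^{(g)}(alpha)=0}) at once. For the alternative expression (\ref{eqn-h^{(g)}(x)}) at an arbitrary $x\in\bbX$, I would argue by cases: when $x\notin\alpha$, the right--continuity of sample paths together with $X(0)=x\notin\alpha$ forces $\tau_\alpha=\wt{\tau}_\alpha$ almost surely, so (\ref{eqn-h^{(g)}(x)}) coincides with (\ref{defn-h^{(g)}}); when $x\in\alpha$, we have $\wt{\tau}_\alpha=0$ and the right--hand side of (\ref{eqn-h^{(g)}(x)}) vanishes, which is consistent with (\ref{eqn-h^{(g)}(alpha)=0}).

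The main technical obstacle will be justifying the finiteness statements $\EE_\alpha[\tau_\alpha]<\infty$ and $\EE_\alpha\!\left[\int_0^{\tau_\alpha}|g(X(t))|\,dt\right]<\infty$ (for $|g|\le f$, possibly after rescaling via (\ref{eqn-ch^{(g)}})), on which the Kac--type formula above rests. Both would be secured by an optional--sampling / Dynkin argument: stopping the local martingale $V(X(t\vmin s_m))-\int_0^{t\vmin s_m}\scA V(X(u))\,du$ at $\tau_\alpha$, letting $m\to\infty$, and invoking (\ref{ineqn-Qv-general}) one obtains
\begin{equation*}
\EE_x\!\left[\int_0^{\tau_\alpha}f(X(t))\,dt\right]
\le V(x)+b\,\EE_x\!\left[\int_0^{\tau_\alpha}\one(X(t)\in\bbC)\,dt\right],
\end{equation*}
whose right--hand side is controlled by the petite--set/atom accessibility bound (\ref{cond-petite-01}) together with the Markov property. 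Once these integrability facts are in hand, the remainder of the proof reduces to direct substitution.
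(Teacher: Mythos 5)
Your proof follows the same route as the paper: you derive the regenerative structure in (i) from the atom definition and the strong Markov property, invoke the Kac/renewal--reward identity for $\braket{\pi,g}$ at $x\in\alpha$ to get (\ref{eqn-h^{(g)}(alpha)=0}), and then argue by cases ($x\in\alpha$ gives $\wt{\tau}_\alpha=0$; $x\notin\alpha$ gives $\tau_\alpha=\wt{\tau}_\alpha$ by right-continuity) to obtain (\ref{eqn-h^{(g)}(x)}). The additional Dynkin-formula verification of the integrability of $\int_0^{\tau_\alpha}f(X(t))\,dt$ that you sketch at the end is handled in the paper by citing the identity from Asmussen and Bladt directly, but the core argument is identical.
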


\begin{proof}
The statement (i) follows from the strong Markov property and the definition of atom $\alpha$. It also follows from the first equation at page 244 of \cite{Asmu94-QUESTA} that
\[
\braket{\pi,  g} 
= {1\over \EE_x[\tau_{\alpha}]}
\EE_x \! \left[ \int_0^{\tau_{\alpha}} g(X(t)) dt\right],
\qquad x \in \alpha.
\]
Combining this and (\ref{defn-h^{(g)}}) leads to (\ref{eqn-h^{(g)}(alpha)=0}).  

It remains to prove (\ref{eqn-h^{(g)}(x)}).
By definition, $\wt{\tau}_{\alpha} = 0$ if $X(0) \in \alpha$. Therefore, for $x \in \alpha$,
\[
\EE_{x} \! \left[ \int_0^{\wt{\tau}_{\alpha}} g(X(t)) dt\right]
- \braket{\pi,  g} \EE_{x}[\wt{\tau}_{\alpha}]
= 0 = h^{(g)}(x).
\]
where the second equality is due to (\ref{eqn-h^{(g)}(alpha)=0}).
Furthermore, if $X(0) \not \in \alpha$ then $\tau_{\alpha} = \wt{\tau}_{\alpha}$ and thus, for $x \not\in \alpha$,
\[
h^{(g)}(x)
=
\EE_x \! \left[ \int_0^{\wt{\tau}_{\alpha}} g(X(t)) dt\right]
- \braket{\pi,  g} \EE_x[\wt{\tau}_{\alpha}].
\]
As a result, (\ref{eqn-h^{(g)}(x)}) holds for all $x \in \bbX$. \qed
\end{proof}

Proposition~\ref{prop-h^{(g)}} together with \cite[Theorem~3.1]{Asmu94-QUESTA}
implies that $h^{(g)}$, given in (\ref{defn-h^{(g)}}), is a solution of Poisson equation (\ref{Poisson-EQ}). Namely, the standard solution $h^{(g)}$ is well-defined. 

\begin{rem}
We can define the standard solution $h^{(g)}$ as in (\ref{eqn-h^{(g)}(x)}). Indeed, this alternative definition is adopted in \cite{LLM2018,Masu17-JORSJ}.
\end{rem}

To proceed further, we require Condition~\ref{cond-petite} below.
\begin{hm-cond}\label{cond-petite}
For some $T_* \in \bbR_+$, there exists a constant $\xi_{T_*} \in (0,1]$ such that
\[
\inf_{x\in\bbC}P^{T_*}(x,\alpha) \ge \xi_{T_*},
\]
where $\bbC$ is the closed small set that appears in Condition~\ref{cond-drift}.
\end{hm-cond}

\begin{rem}
Condition~\ref{cond-petite} is satisfied if the small set $\bbC$ is finite (see Lemma~\ref{lem-petite}).
\end{rem}

Under Conditions~\ref{cond-drift}, \ref{cond-atom} and \ref{cond-petite}, we show a lemma used to derive bounds for $|h^{(g)}|$.
\begin{lem}\label{lem-xi_T}
If Conditions~\ref{cond-drift}, \ref{cond-atom} and \ref{cond-petite} are satisfied, then, for each $T \ge T_*$ there exists some constant $\xi_T \in (0,1]$ such that
\begin{equation}
\inf_{x\in\bbC}P^T(x,\alpha) \ge \xi_T.
\label{cond-petite-02}
\end{equation}
\end{lem}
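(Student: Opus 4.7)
The plan is to extend the lower bound of Condition~\ref{cond-petite} from the single time $T_*$ to all $T \ge T_*$ by means of a Chapman-Kolmogorov decomposition combined with the atom structure of Condition~\ref{cond-atom}. The $f$-modulated drift Condition~\ref{cond-drift} enters indirectly, via the positive Harris recurrence it yields (Remark~\ref{rem-f-drift}).

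For $T = T_*$ one simply takes $\xi_T := \xi_{T_*}$ from Condition~\ref{cond-petite}. For $T > T_*$, I would set $s := T - T_* > 0$ and write, via the semigroup property,
\begin{equation*}
P^T(x,\alpha)
= \int_{\bbX} P^{T_*}(x,dy)\,P^s(y,\alpha)
\ge \int_{\alpha} P^{T_*}(x,dy)\,P^s(y,\alpha).
\end{equation*}
Since $\alpha$ is an atom, $P^s(y,\alpha) = \nu_s(\alpha)$ for every $y \in \alpha$, so the right-hand side factors as $P^{T_*}(x,\alpha)\,\nu_s(\alpha)$. Applying Condition~\ref{cond-petite} then yields
\begin{equation*}
\inf_{x\in\bbC} P^T(x,\alpha)
\ge \xi_{T_*}\,\nu_{T-T_*}(\alpha),
\end{equation*}
so the choice $\xi_T := \xi_{T_*}\,\nu_{T-T_*}(\alpha)$ works provided this quantity is strictly positive.

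The main obstacle is therefore to verify $\nu_s(\alpha) > 0$ for every $s > 0$. I plan to handle this in two stages. First, the right-continuity of sample paths of the Borel right process $\{X(t)\}$ together with the natural holding-time behaviour at the atom should deliver $\nu_t(\alpha) > 0$ for all $t$ in some initial interval $(0,\delta]$. Second, the Chapman-Kolmogorov equation applied at $y \in \alpha$ gives the multiplicative bound
\begin{equation*}
\nu_{s+t}(\alpha)
\ge \int_\alpha P^s(y,dz)\,P^t(z,\alpha)
= \nu_s(\alpha)\,\nu_t(\alpha),\qquad s,t > 0,
\end{equation*}
so that $\{t > 0 : \nu_t(\alpha) > 0\}$ is closed under addition; combined with the first stage, this gives $\nu_s(\alpha) \ge \nu_{s/n}(\alpha)^n > 0$ for any $s > 0$ by choosing $n$ large enough that $s/n \le \delta$. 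The delicate step is the first one, where the topological and measurable structure of $\alpha$ within $\bbX$ must be exploited; in the queueing applications of Sections~\ref{sec-MAP-GI-1} and \ref{sec-M-GI-1} this is immediate from the exponential sojourn at the ``empty system'' state.
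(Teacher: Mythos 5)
Your Chapman--Kolmogorov decomposition through the atom at time $T_*$, giving $\inf_{x\in\bbC}P^T(x,\alpha) \ge \xi_{T_*}\,\nu_{T-T_*}(\alpha)$, is exactly the paper's opening move, and you correctly reduce the problem to showing $\nu_s(\alpha) > 0$ for all $s > 0$. What you leave open as the ``delicate'' first stage, however, is the substance of the lemma, and the paper closes it in one line. Because $\alpha$ is an atom (all its states share the same transition law) and the chain is a Borel right process with right-continuous paths, the sojourn time at $\alpha$ is memoryless and hence exponentially distributed with some rate $c_* \in (0,\infty)$; this gives
\begin{equation*}
P^s(\alpha,\alpha) \ge \PP\bigl(X(u)\in\alpha,\ \forall u\in[0,s] \mid X(0)\in\alpha\bigr) = e^{-c_* s} > 0
\end{equation*}
for every $s \ge 0$. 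This is not a feature peculiar to the queueing models of Sections~\ref{sec-MAP-GI-1} and \ref{sec-M-GI-1} but a general consequence of the atom structure, and it makes your two-stage plan (local positivity near $t=0$ followed by the multiplicative bound $\nu_{s+t}(\alpha)\ge\nu_s(\alpha)\nu_t(\alpha)$) unnecessary---that plan would work once the local positivity were settled, but the direct exponential bound is cleaner and supplies the explicit constant $\xi_T = \xi_{T_*}\, e^{-c_*(T-T_*)}$. Aside from this unfinished step your argument is sound and follows the paper's route.
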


\begin{proof}
Since $\alpha$ is an atom, there exists some $c_* > 0$ such that
\begin{equation}
\PP(X(u) \in \alpha, \forall u \in [0,t] \mid X(0) \in \alpha)
= e^{-c_* t}\qquad \mbox{for all $t \in \bbR_+$.}
\label{indeqn-atom}
\end{equation}
Using (\ref{indeqn-atom}) and Condition~\ref{cond-petite}, we have, for all $t \in \bbR_+$,
\begin{eqnarray*}
\inf_{x\in\bbX} P^{t+T_*}(x,\alpha)
&\ge& \inf_{x\in\bbX} P^{T_*}(x,\alpha) P^t(\alpha,\alpha)
\nonumber
\\
&\ge& \xi_{T_*} P^t(\alpha,\alpha)
\nonumber
\\
&\ge& \xi_{T_*} \PP(X(u) \in \alpha, \forall u \in [0,t] \mid X(0) \in \alpha)
\nonumber
\\
&\ge& \xi_{T_*} e^{-c_* t},
\end{eqnarray*}
which completes the proof. \qed
\end{proof}

We are now ready to present the bound (\ref{bound-h-general}) for $|h^{(g)}|$.
\begin{thm}\label{thm-bound-h}
Suppose that Conditions~\ref{cond-drift}, \ref{cond-atom} and \ref{cond-petite} are satisfied. For $T\ge T_*$, let $\xi_T$ be a constant such that (\ref{cond-petite-02}) holds. We then have the bound (\ref{bound-h-general}), more precisely,
\begin{equation}
|h^{(g)}| 
\le \left(1 + { |\braket{\pi,  g}| \over \inf_{y \in \bbX} f(y)}  \right)
\left(
V_0 + { b T \over \xi_T}
\right)
\quad \mbox{for all $|g| \le f$ and $T \ge T_*$.}
\label{bound-h-general-stronger}
\end{equation}
We also have a weaker bound insensitive to $g$:
\begin{equation}
|h^{(g)}| 
\le \left(1 + { b\pi(\bbC) \over \inf_{y \in \bbX} f(y)}  \right)
\left(
V_0 + { b T \over \xi_T }
\right)
\quad \mbox{for all $|g| \le f$ and $T \ge T_*$.}
\label{bound-h-general-weaker}
\end{equation}
\end{thm}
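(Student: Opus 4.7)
The plan is to exploit the probabilistic representation~(\ref{eqn-h^{(g)}(x)}) of $h^{(g)}$ established in Proposition~\ref{prop-h^{(g)}} and reduce the bound on $|h^{(g)}|$ to a bound on the mean time the chain spends in the small set $\bbC$ before hitting the atom $\alpha$. The first step is the triangle inequality applied to~(\ref{eqn-h^{(g)}(x)}), combined with $|g|\le f$ and the trivial estimate $\EE_x[\wt{\tau}_\alpha]\le(\inf_{y\in\bbX}f(y))^{-1}\EE_x[\int_0^{\wt{\tau}_\alpha}f(X(t))\,dt]$. This yields
\[
|h^{(g)}(x)|\le\left(1+\frac{|\braket{\pi,g}|}{\inf_{y\in\bbX}f(y)}\right)\EE_x\!\left[\int_0^{\wt{\tau}_\alpha}f(X(t))\,dt\right],
\]
so it suffices to show that the last expectation is at most $V_0(x)+bT/\xi_T$.

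For the second step I would apply the defining Dynkin-type identity~(\ref{defn-extended-generator}) to $V$ at the stopping time $\wt{\tau}_\alpha\wedge t\wedge s_m$, invoke the drift inequality~(\ref{ineqn-Qv-general}), and let $m,t\to\infty$ via monotone convergence. Since the sample paths are right-continuous and the chain enters $\alpha$ at time $\wt{\tau}_\alpha$, one has $V(X(\wt{\tau}_\alpha))\ge\inf_{y\in\alpha}V(y)$, and the resulting estimate is
\[
\EE_x\!\left[\int_0^{\wt{\tau}_\alpha}f(X(t))\,dt\right]\le V_0(x)+b\,\phi(x),\qquad \phi(x):=\EE_x\!\left[\int_0^{\wt{\tau}_\alpha}1_{\bbC}(X(t))\,dt\right].
\]

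The crux is to show $\sup_{x\in\bbX}\phi(x)\le T/\xi_T$, which I would establish by a strong-Markov recursion on the truncated quantities $\phi_N(x):=\EE_x[\int_0^{\wt{\tau}_\alpha\wedge NT}1_{\bbC}(X(t))\,dt]\le NT$ (finite for every $N$, so no circular argument is needed). Splitting this integral at time $T$ and invoking the strong Markov property gives, for $x\in\bbC$,
\[
\phi_N(x)\le T+\EE_x[1(\wt{\tau}_\alpha>T)\,\phi_{N-1}(X(T))]\le T+(1-\xi_T)\sup_{y\in\bbX}\phi_{N-1}(y),
\]
where I have used $\PP_x(X(T)\notin\alpha)\le 1-\xi_T$ coming from~(\ref{cond-petite-02}). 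For $x\notin\bbC$, strong Markov at $\sigma_{\bbC}:=\inf\{t\ge 0:X(t)\in\bbC\}$ (with right-continuity of paths and closedness of $\bbC$ ensuring $X(\sigma_{\bbC})\in\bbC$ on $\{\sigma_{\bbC}<\infty\}$) gives $\phi_N(x)\le\sup_{z\in\bbC}\phi_N(z)$. Writing $S_N:=\sup_{x\in\bbX}\phi_N(x)=\sup_{z\in\bbC}\phi_N(z)$, the recursion $S_N\le T+(1-\xi_T)S_{N-1}$ with $S_0=0$ forces $S_N\le T\sum_{k=0}^{N-1}(1-\xi_T)^k\le T/\xi_T$, and monotone convergence in $N$ transfers the bound to $\phi$. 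Combining this with the two previous displays proves~(\ref{bound-h-general-stronger}).

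The weaker bound~(\ref{bound-h-general-weaker}) then follows by integrating~(\ref{ineqn-Qv-general}) against $\pi$ and using $\pi\scA=0$ from~(\ref{eqn-pi*A=0}) to get $\braket{\pi,f}\le b\pi(\bbC)$, so $|\braket{\pi,g}|\le\braket{\pi,|g|}\le\braket{\pi,f}\le b\pi(\bbC)$ whenever $|g|\le f$; substituting into~(\ref{bound-h-general-stronger}) yields~(\ref{bound-h-general-weaker}). The main obstacle is the recursive step for $\phi$: the truncation must be set up carefully so as not to presuppose $\sup_y\phi(y)<\infty$, and the strong-Markov splitting must be arranged in a form compatible with passing the $N\to\infty$ limit. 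The Dynkin step is also delicate, as one must discharge the localising stopping times $s_m$ and identify $X(\wt{\tau}_\alpha)\in\alpha$ from the path regularity of the Borel right process.
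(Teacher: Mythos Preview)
Your proof is correct, and the overall architecture matches the paper's exactly: triangle inequality on the probabilistic representation of $h^{(g)}$, Dynkin plus the drift condition to reduce to the $\bbC$-occupation time $\phi(x)=\EE_x[\int_0^{\tau_\alpha}1_{\bbC}(X(t))\,dt]$, and finally the bound $\phi\le T/\xi_T$; the derivation of (\ref{bound-h-general-weaker}) from (\ref{bound-h-general-stronger}) via $\pi\scA=0$ is also identical.

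The only real difference is in how you bound $\phi$. Your route is a geometric recursion on the truncations $\phi_N$, splitting at time $T$ and using $\PP_x(\wt\tau_\alpha>T)\le 1-\xi_T$ for $x\in\bbC$. The paper instead observes the pointwise inequality $1_{\bbC}(x)\le \xi_T^{-1}P^T(x,\alpha)$ (immediate from (\ref{cond-petite-02})), whence
\[
\phi(x)\le \xi_T^{-1}\,\EE_x\!\left[\int_0^{\tau_\alpha}P^T(X(t),\alpha)\,dt\right]
=\xi_T^{-1}\,\EE_x\!\left[\int_0^{\tau_\alpha}1_\alpha(X(t+T))\,dt\right],
\]
and the last integral is bounded by $T$ pathwise (by the substitution $s=t+T$ and the fact that $X(s)\notin\alpha$ for $s<\tau_\alpha$ when $x\notin\alpha$). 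This avoids the truncation/limit machinery and the separate strong-Markov step at $\sigma_{\bbC}$, giving a shorter argument; your recursion, on the other hand, is a perfectly standard regeneration-style estimate and makes no use of the identity $P^T(X(t),\alpha)=\EE[1_\alpha(X(t+T))\mid X(t)]$, so it may feel more elementary to some readers.
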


\begin{proof}
See Appendix~\ref{proof-main-thm-general}. \qed
\end{proof}

\begin{rem}
Since $\pi(\bbC) \le 1$, the bound (\ref{bound-h-general-weaker}) yields
\[
|h^{(g)}| 
\le \left(1 + { b \over \inf_{y \in \bbX} f(y)}  \right)
\left(
V_0 + { b T \over \xi_T }
\right)
\quad \mbox{for all $|g| \le f$ and $T \ge T_*$.}
\]
\end{rem}

\medskip

When $\bbC = \alpha$, we have the following result.
\begin{coro}\label{coro-explicit-bound}
If Conditions~\ref{cond-drift}, \ref{cond-atom} and \ref{cond-petite} hold with $\bbC = \alpha$, then
\begin{eqnarray}
|h^{(g)}| 
&\le& \left(1 + {|\braket{\pi,  g}| \over \inf_{y\in \bbX}f(y)} \right) V_0
\le   \left(1 + {b\pi(\alpha) \over \inf_{y\in \bbX}f(y)} \right) V_0
\quad\mbox{for all $|g| \le f$}. \quad
\label{explicit-bounds-h-general}
\end{eqnarray}
\end{coro}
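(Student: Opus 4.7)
The plan is to apply Theorem~\ref{thm-bound-h} at the degenerate choice $T = 0$, which is available here precisely because we assume $\bbC = \alpha$. First, I would observe that when $\bbC = \alpha$, Condition~\ref{cond-petite} is satisfied with $T_* = 0$ and $\xi_{T_*} = 1$, since $P^0(x,\alpha) = 1$ for every $x \in \alpha = \bbC$. Taking $T = T_* = 0$ and $\xi_T = 1$ in the bound (\ref{bound-h-general-stronger}) makes the term $bT/\xi_T$ vanish, and (\ref{bound-h-general-stronger}) immediately collapses to the first inequality of (\ref{explicit-bounds-h-general}).

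For the second inequality, it remains to show that $|\braket{\pi,g}| \le b\pi(\alpha)$ whenever $|g|\le f$. I would derive this by integrating the $f$-modulated drift inequality (\ref{ineqn-Qv-general}) against the invariant probability measure $\pi$, and invoking $\pi \scA = 0$ from (\ref{eqn-pi*A=0}):
\[
0 \;=\; \braket{\pi,\scA V} \;\le\; -\braket{\pi,f} + b\pi(\bbC) \;=\; -\braket{\pi,f} + b\pi(\alpha).
\]
Hence $\braket{\pi,f} \le b\pi(\alpha)$, and then the hypothesis $|g|\le f$ gives $|\braket{\pi,g}| \le \braket{\pi,|g|} \le \braket{\pi,f} \le b\pi(\alpha)$. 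Substituting this into the first inequality of (\ref{explicit-bounds-h-general}) yields the second.

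The only delicate point is justifying that $\braket{\pi,\scA V}$ is well defined and equal to zero, given that $V$ is only assumed to be nonnegative and extended-real-valued with $V(x_*)<\infty$ at some single point $x_*$. This is precisely the content of Remark~\ref{rem-f-drift} combined with the appendix proof of (\ref{eqn-pi*A=0}); once that equation is taken as granted, the remaining chain of inequalities above is monotone and survives the extended-real arithmetic (either $\braket{\pi,f}$ is finite and bounded by $b\pi(\alpha)$, or $\braket{\pi,f}=\infty$ would contradict $0 \le -\braket{\pi,f}+b\pi(\alpha)$). No additional obstacle is anticipated beyond this bookkeeping, which is already handled upstream in the paper.
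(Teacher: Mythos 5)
Your proof is correct and follows essentially the same route as the paper's: both apply Theorem~\ref{thm-bound-h} with $\bbC=\alpha$ and drive the term $bT/\xi_T$ to zero, and both obtain the second inequality by integrating (\ref{ineqn-Qv-general}) against $\pi$ and using $\pi\scA=0$. The only cosmetic differences are that you plug in $T=T_*=0$, $\xi_T=1$ directly while the paper takes $\xi_T=e^{-c_*T}$ from (\ref{indeqn-atom}) and lets $T\downarrow 0$, and that you re-derive $|\braket{\pi,g}|\le b\pi(\alpha)$ instead of simply citing the weaker bound (\ref{bound-h-general-weaker}) at $T\downarrow0$.
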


\begin{proof}
From (\ref{indeqn-atom}), we have
\[
P^T(\alpha,\alpha) \ge e^{-c_* T}\quad \mbox{for all $T \in \bbR_+$.}
\]
Thus, Theorem~\ref{thm-bound-h} yields the bounds (\ref{bound-h-general-stronger}) and (\ref{bound-h-general-weaker}) with $\xi_T = e^{-c_* T}$. Letting $T \downarrow 0$ in these bounds, we obtain (\ref{explicit-bounds-h-general}). \qed
\end{proof}

\section{Application to a MAP/GI/1 queue}\label{sec-MAP-GI-1}

This section discusses the application of Theorem~\ref{thm-bound-h} to a MAP/GI/1 queue.
The system has a single server and a waiting room of infinite capacity. The arrivals of customers form a Markovian arrival process (MAP) \cite{Luca90}, which is controlled by an irreducible Markov chain $\{J(t);t \in \bbR_+\}$ with a finite state space $\bbM:=\{1,2,\dots,M\}$. Let $N(t)$, $t \in \bbR_+$, denote the total number of arrivals in the interval $(0,t]$. We assume that $N(0)=0$ and, for $i,j\in\bbM$,
\begin{eqnarray*}
&&
\PP(N(t+\Delta t) - N(t) = k, J(t+\Delta t) = j \mid J(t) = i)
\nonumber
\\
&&{} \quad
= \left\{
\begin{array}{ll}
\delta_{i,j} + C_{i,j} \Delta t + o(\Delta t), 	& \quad k=0,
\\
D_{i,j} \Delta t + o(\Delta t), 				& \quad k=1,
\\
o(\Delta t), 									& \quad k=2,3,\dots,
\end{array}
\right.
\end{eqnarray*}
where $\delta_{i,j}$ denotes the {\it Kronecker delta}, and where $o(t)$ represents some function such that, if divided by $t$, it converges to zero as $t \to 0$. Let $\vc{C}= (C_{i,j})_{i,j\in\bbM}$ and $\vc{D}= (D_{i,j})_{i,j\in\bbM}$. It then follows that $\vc{C} + \vc{D}$ is the infinitesimal generator of the irreducible  Markov chain $\{J(t)\}$ and thus has a unique stationary probability vector, denoted by $\vc{\varpi}:=(\varpi_i)_{i\in\bbM}$. We now define $\lambda = \vc{\varpi}\vc{D}\vc{e}$, where $\vc{e}=(1,1,\dots,1)^{\top}$. The factor $\lambda$ is called the arrival rate. 

As described above, customers arrive according to MAP characterized by a pair $(\vc{C},\vc{D})$. We assume that arriving customers are served on a first-come-first-served basis and their service times are independent and identically distributed (i.i.d.) with a distribution $H$ such that 
\[
H(0) = 0,\quad \mu^{-1}:=\int_0^{\infty} x H(dx) \in (0,\infty).
\]
This queue is referred to as MAP/GI/1 queue. 

Let $W(t)$, $t \in \bbR_+$, denote the workload (i.e., the total unfinished work) in the system at time $t$. 
Assume that $\rho := \lambda /\mu \in (0,1)$. 
Thus, $\{X(t):=(W(t),J(t));t \in\bbR_+\}$ is an ergodic Markov chain with state space $\bbX:=\{(x,i) \in \bbR_+ \times \bbM\}$  (see, e.g., \cite{Loyn62}). Let $\{P^t;t\in \bbR_+\}$ be the transition semigroup of the Markov chain $\{X(t)\}$. Moreover, let $V:\bbX\to\bbR_+$ be a function such that,  for any fixed $i \in \bbM$, $V(x,i)$ is differentiable with respect to $x \in \bbR_+$, and let $\vc{v}(x)=(V(x,i))_{i\in\bbM}$ for $x \in \bbR_+$. It then follows that, for $x \in \bbR_+$,
\begin{eqnarray}
P^{t}\vc{v}(x) 
&=& \left( \int_{y\in\bbR_+} \sum_{j\in\bbM}P^t((x,i),(dy,j)) V(y,j) \right)_{\!\! i\in\bbM}
\nonumber
\\
&=&
(\vc{I} + \vc{C} t) \vc{v}( (x - t)^+) 
+ \int_0^{\infty} \vc{D} t H(dy) \vc{v}(x+y)  + o(t),
\label{eqn-P^tV(x)}
\end{eqnarray}
where $\vc{I} $ denotes the identity matrix and  $(x)^+ = \max(x,0)$ for $x \in \bbR_+$. It also follows from (\ref{eqn-P^tV(x)}) (see (\ref{eqn-wt{A}=A}) in Remark~\ref{rem-weak-generator}) that
\begin{align}
\scA\vc{v}(x)
&=\lim_{t\to0}{P^{t}\vc{v}(x) - \vc{v}(x) \over t}
\nonumber
\\
&=
\left\{
\begin{array}{ll}
\vc{C} \vc{v}(0) + \dm\int_0^{\infty} \vc{D} \vc{v}(y)H(dy), 
& \quad x= 0,
\\
- \vc{v}'(x) + \vc{C} \vc{v}(x) + \dm\int_0^{\infty} \vc{D}\vc{v}(x+y)H(dy), & \quad x > 0,
\end{array}
\right.
\label{defn-A-BMAP-GI-1}
\end{align}
where $\vc{v}'(x) = (V'(x,i))_{i\in\bbM}$ for $x \in \bbR_+$.

We assume that $H$ is light-tailed, i.e., 
\begin{equation}
\ol{\theta}:=\sup \left\{\theta \in \bbR_+: 
 \int_0^{\infty} e^{\theta x} H(dx) < \infty \right\} > 0.
\label{cond-light-tailed}
\end{equation}
Let $\sigma(\theta)$, $\theta \in (-\infty,\ol{\theta})$, denote a real maximum eigenvalue of $\vc{C} + \wh{H}(\theta)\vc{D}$, where
\[
\wh{H}(\theta)=\int_0^{\infty} e^{\theta x} H(dx),
\qquad \theta < \ol{\theta}.
\]
There exists some $K:=K(\theta) > 0$ such that $\vc{C} + \wh{H}(\theta)\vc{D} + K\vc{I} \ge \vc{O}$ is irreducible and thus it has a positive right eigenvector, denoted by $\vc{u}(\theta):=(u(\theta,i))_{i\in\bbM} > \vc{0}$, belonging to Perron-Frobenius eigenvalue $\sigma(\theta)+K$ (see, e.g., \cite[Theorem 8.4.4]{Horn13}). Therefore, for $\theta \in (-\infty,\ol{\theta})$, $\sigma(\theta)$ is a simple eigenvalue of $\vc{C} + \wh{H}(\theta)\vc{D}$ and 
\begin{equation}
\{\vc{C} + \wh{H}(\theta) \vc{D}\}\vc{u}(\theta)
= \sigma(\theta)\vc{u}(\theta).
\label{eqn-C+D(theta)u(theta)}
\end{equation}

Clearly, $\vc{C} + \wh{H}(\theta)\vc{D}$ is differentiable (with respect to $\theta$). Thus, we can assume that $\vc{u}(\theta)$ is differentiable (see \cite[Chapter 9, Theorem 8]{Lax07}). Furthermore, $\sigma(\theta)$ is differentiable (see \cite[Chapter 9, Theorem 7]{Lax07}). Note here that $\vc{\varpi}(\vc{C}+\vc{D}) = \vc{0}$, $\sigma(0)=0$ and $\vc{u}(0) = c\vc{e}$ for some $c > 0$. Using these facts, we calculate $\sigma'(0)$ from (\ref{eqn-C+D(theta)u(theta)}), which results in
\[
\sigma'(0) = \vc{\varpi} \vc{D} \vc{e} 
\left. {d \over d\theta}\wh{H}(\theta) \right|_{\theta=0}
= \lambda /\mu = \rho < 1.
\]
Therefore, $\sigma(\theta) < \theta$ for some $\theta > 0$. 

In what follows, we fix $\theta$ such that $\theta > 0$ and $\sigma(\theta) < \theta$. We also assume, without loss of generality, that
\begin{equation}
\max_{j\in\bbM} u(\theta,j) = 1.
\label{eqn-max-u(theta,j)}
\end{equation}
We then fix $\vc{v}(x) = (V(x,i))_{i\in\bbM}$ such that
\begin{eqnarray}
\vc{v}(x) = e^{\theta x} \vc{u}(\theta),
\qquad x \in \bbR_+.
\label{eqn-v(x)-BMAP-GI-1}
\end{eqnarray}
Substituting (\ref{eqn-v(x)-BMAP-GI-1}) into (\ref{defn-A-BMAP-GI-1}), and using (\ref{eqn-C+D(theta)u(theta)}) yields
\begin{align*}
\scA\vc{v}(0)
&=
\{\vc{C} + \wh{H}(\theta) \vc{D}\} \vc{u}(\theta)
= \sigma(\theta) \vc{u}(\theta),
\end{align*}
and
\begin{align*}
\scA\vc{v}(x)
&=
\left[
- \theta\vc{I} + \{\vc{C} + \wh{H}(\theta) \vc{D}\}
\right] e^{\theta x}  \vc{u}(\theta)
\nonumber
\\
&= -(\theta - \sigma(\theta)) e^{\theta x} \vc{u}(\theta)
\nonumber
\\
&= -(\theta - \sigma(\theta)) \vc{v}(x) < 0, \qquad x > 0.
\end{align*}
These equations together with (\ref{eqn-max-u(theta,j)}) lead to
\begin{equation*}
\scA V \le -(\theta - \sigma(\theta)) V 
+ \theta 1_{\{0\} \times \bbM}.
\end{equation*}
Therefore, Condition~\ref{cond-drift} holds with
\begin{eqnarray}
b = \theta,
\quad
f = (\theta - \sigma(\theta)) V,
\quad
\bbC = \{0\} \times \bbM,
\label{eqn-b-f-C}
\end{eqnarray}
where $V$ is given in (\ref{eqn-v(x)-BMAP-GI-1}).

Let $i_0 \in \argmax_{j\in\bbM} u(\theta,j)$. Equation (\ref{eqn-max-u(theta,j)}) then lead to $u(\theta,i_0) = 1$. Thus, (\ref{defn-V_0}) and (\ref{eqn-v(x)-BMAP-GI-1}) yield
\begin{equation}
V_0(x,i) = u(\theta,i)e^{\theta x} - 1,
\qquad (x,i) \in \bbR_+ \times \bbM.
\label{eqn-V_0-MAP-GI-1}
\end{equation}
Note here that $\alpha:=(0,i_0) \in \bbX$ is an atom, which shows that Condition~\ref{cond-atom} holds. 

We now fix $x_0 > 0$ such that $H(x_0) > 0$, and recall that $\vc{C} + \vc{D}$ is an irreducible generator of the Markov chain $\{J(t)\}$ with state space $\bbM=\{1,2,\dots,M\}$. It then follows that, for any $t_0 > 0$ and $i\in\bbM$,
\begin{eqnarray*}
P^{t_0+Mx_0}((0,i),(0,i_0)) 
\ge \left[
\exp\{\vc{C} t_0\}
\left(
H(x_0)\vc{D} \exp\{\vc{C} x_0\}
\right)^M
\right]_{i,i_0} > 0,
\end{eqnarray*}
where $[\,\cdot\,]_{i,j}$ denotes the $(i,j)$-th element of the matrix in the square brackets. 
Therefore, Condition~\ref{cond-petite} holds with 
\begin{align}
\alpha & =(0,i_0),
\nonumber
\\
T & = t_0+Mx_0,
\label{eqn-alpha-T}
\\
\xi_T &= [ H(x_0) ]^M 
\min_{i\in\bbM}\left[
\exp\{\vc{C} t_0\}
\left(
\vc{D} \exp\{\vc{C} x_0\}
\right)^M
\right]_{i,i_0}.
\label{defn-xi_T-MAP-GI-1-01}
\end{align}

We have confirmed that the conditions of Theorem~\ref{thm-bound-h} are satisfied in the present setting. Note here that (by Little's law) 
\[
\pi(\bbC) = \pi(\{0\} \times \bbM) = 1 - \rho.
\]
It thus follows from Theorem~\ref{thm-bound-h}, (\ref{eqn-b-f-C}) and (\ref{eqn-V_0-MAP-GI-1}) that, for all $(x,i) \in \bbR_+ \times \bbM$ and $|g| \le (\theta - \sigma(\theta))V$,
\begin{eqnarray}
|h^{(g)}(x,i)|
&\le& 
\left(
1 + { \theta (1- \rho) \over \{\theta - \sigma(\theta)\} \dm\min_{j\in\bbM}u(\theta,j) }
\right)
\left\{
 u(\theta,i) e^{\theta x} - 1
+  {\theta T \over \xi_T}  
\right\}, \qquad~
\label{bound-MAP-GI-1-01}
\end{eqnarray}
where $T$ and $\xi_T$ are given in (\ref{eqn-alpha-T}) and (\ref{defn-xi_T-MAP-GI-1-01}), respectively.

The bound (\ref{bound-MAP-GI-1-01}) includes $\sigma(\theta)$, $\vc{u}(\theta)$, and $\exp\{\vc{C} t\}$ ($t=t_0,x_0$). The Perron-Frobenius eigenvalue $\sigma(\theta)$ and vector $\vc{u}(\theta)$ can be computed by a common method, such as the {\it power method}. The matrix exponential $\exp\{\vc{C} t\}$ can be computed by the uniformization technique (see, e.g., \cite[Section~4.5.2]{Tijm03}):
\[
\exp\{\vc{C} t\} 
= \sum_{\ell=0}^{\infty} e^{-\zeta t} {(\zeta t)^{\ell} \over \ell !}
[\vc{I} + \zeta^{-1} \vc{C}]^{\ell},\qquad t \in \bbR_+,
\]
where $\zeta = \max_{i\in\bbM} |C_{i,i}|$. Therefore, the bound (\ref{bound-MAP-GI-1-01}) can be computable, provided that $T / \xi_T$ is given. However, we cannot readily obtain an explicit expression of $\xi_T$ in the general setting.

We now consider a special case. Suppose that 
\begin{equation}
C_{i,i_0} > 0\quad \mbox{for all $i \in \bbM \setminus \{i_0\}$},
\label{cond-C_{i,i_0} > 0}
\end{equation}
and fix
\begin{equation}
\xi_T = \min_{i\in\bbM}[\exp\{\vc{C} T\}]_{i,i_0} > 0,
\qquad T  > 0,
\label{defn-xi_T-MAP-GI-1-02}
\end{equation}
which leads to
\[
P^T((0,i),(0,i_0)) \ge \xi_T,\qquad T  > 0.
\]
Therefore, substituting (\ref{defn-xi_T-MAP-GI-1-02}) into (\ref{bound-MAP-GI-1-01}) yields
\begin{eqnarray*}
|h^{(g)}(x,i)|
&\le& 
\left(
1 + { \theta (1- \rho) \over \{\theta - \sigma(\theta)\} \dm\min_{j\in\bbM}u(\theta,j) }
\right)
\nonumber
\\
&& {} \times 
\left\{
 u(\theta,i) e^{\theta x} - 1
+  {\theta T \over \dm\min_{i\in\bbM}[\exp\{\vc{C} T\}]_{i,i_0}}  
\right\}, \qquad T > 0.
\end{eqnarray*}
Letting $T \downarrow 0$ in this inequality, we obtain, for $(x,i) \in \bbR_+ \times \bbM$,
\begin{eqnarray}
|h^{(g)}(x,i)|
&\le& 
\left(
1 + { \theta (1- \rho) \over \{\theta - \sigma(\theta)\} \dm\min_{j\in\bbM}u(\theta,j) }
\right)
\left\{
 u(\theta,i) e^{\theta x} - 1
+  {\theta \over \dm\min_{i\in\bbM}C_{i,i_0}}  
\right\}. \qquad
\label{bound-MAP-GI-1-02}
\end{eqnarray}

Compared with (\ref{bound-MAP-GI-1-01}), this bound (\ref{bound-MAP-GI-1-02}) replaces the {\it troublesome} factor $T / \xi_T$ by $1/\min_{i\in\bbM}C_{i,i_0}$ under the additional condition (\ref{cond-C_{i,i_0} > 0}). Recall here that $T / \xi_T$ vanishes if $\bbC=\{0\} \times \bbM$ is an atom (see Corollary~\ref{coro-explicit-bound}). Such a {\it favorable} queueing model is considered in the next section.

\section{Application to an M/GI/1--WCL queue}\label{sec-M-GI-1}

\subsection{Model description and basic results}

This section considers an M/GI/1 queue with workload capacity limit (WCL) $L \in (0,\infty]$ \cite{Perr01}. Customers arrive at the system according to a Poisson process with rate $\lambda \in (0,\infty)$, and their service times are positive (with probability one) and i.i.d.\ with distribution $H$ having mean $\mu^{-1} \in (0,\infty)$.

An arriving customer is accepted if the total workload including its service time is not greater than the limit $L$; otherwise the customer is rejected. We refer to this queueing model as the M/GI/1--WCL queue. Note that if $L = \infty$ then the M/GI/1--WCL queue is reduced to an ordinary M/GI/1 queue, which accepts all arriving customers.

We first consider the finite model, i.e., the case of $L < \infty$.
Let $X_L(t)$, $t \in \bbR_+$, denote the workload in the finite model at time $t$. The workload process $\{X_L(t);t \in\bbR_+\}$ is a positive Harris chain with state space $\bbX = [0,L]$ and that its transition semigroup $\{P_L^t;t\in\bbR_+\}$ satisfies the following:
\begin{eqnarray}
P_L^{t}V(x) 
&=& \int_0^L P_L^t(x,dy) V(y) 
\nonumber
\\
&=& [1 - \lambda t H(L-x) ] V( (x - t)^+) 
\nonumber
\\
&& {} \quad
+ \int_0^{L-x} \lambda t H(dy) V(x+y)  + o(t), 
\qquad x \in [0,L],
\label{eqn-P^tV(x)-MG1}
\end{eqnarray}
where $V:\bbR_+ \to \bbR_+$ (which appears hereafter in this section) denotes a differentiable function. 
Furthermore, let $\scA_L$ denote the extended generator of $\{X_L(t)\}$  (see (\ref{eqn-wt{A}=A}) in Remark~\ref{rem-weak-generator}). It then follows from (\ref{eqn-P^tV(x)-MG1}) that, for $0 \le x \le L$,
\begin{eqnarray}
\scA_L V(x)
&=& - 1_{(0,\infty)}(x) V'(x) -\lambda H(L-x) V(x) 
+ \lambda\dm\int_0^{L-x} H(dy) V(x+y)
\nonumber
\\
&=& - 1_{(0,\infty)}(x) V'(x) 
+ \lambda\dm\int_0^{L-x} H(dy) \{ V(x+y) - V(x) \}. 
\label{defn-A-MG1-finite-02}
\end{eqnarray}
Since $\{X_L(t)\}$ is positive Harris, it has a unique invariant probability measure, denoted by $\pi_L$, on $\calB([0,L])$. The invariant probability measure $\pi_L$ satisfies the equilibrium equation:
\begin{eqnarray}
0= \pi_L \scA V
&=& \int_0^L \pi(dx) 
\bigg[
- 1_{(0,\infty)}(x) V'(x) 
\nonumber
\\
&& {} \qquad \qquad \qquad
\lambda\dm\int_0^{L-x} H(dy) \{ V(x+y) - V(x) \}
\bigg].
\label{eqn-pi_L-A_L-V=0}
\end{eqnarray}

Next we consider the infinite model, i.e., the case of $L = \infty$.
Let $X(t)$, $t \in \bbR_+$, denote the workload in the infinite model at time $t$. Let $\{P^t;t\in\bbR_+\}$ denote the transition semigroup of the Markov chain $\{X(t);t\in\bbR_+\}$. We then have
\begin{eqnarray}
P^{t}V(x) 
&=& (1 - \lambda t) V( (x - t)^+) 
+ \int_0^{\infty} \lambda t H(dy) V(x+y)  + o(t), 
\quad x \in \bbR_+. \qquad
\label{eqn-P^tV(x)-MG1-infinite}
\end{eqnarray}
Therefore, the extended generator of $\{X(t)\}$, denoted by $\scA$, satisfies the following (see Remark~\ref{rem-weak-generator}):
\begin{eqnarray}
\scA V(x)
&=& - 1_{(0,\infty)}(x) V'(x) 
+ \lambda\dm\int_0^{\infty} H(dy) \{ V(x+y) - V(x) \},  \quad x\in\bbR_+.\qquad
\label{defn-A-MG1}
\end{eqnarray}

\begin{rem}\label{rem-A-MG1}
Suppose that
\begin{equation}
\lim_{y\to\infty}\ol{H}(y)V(x+y) = 0\quad \mbox{for any fixed $x \in \bbR_+$}.
\label{cond-H}
\end{equation}
It then follows from (\ref{defn-A-MG1}) that
\begin{eqnarray}
\scA V(x)
&=& - 1_{(0,\infty)}(x) V'(x) 
+ \lambda \dm\int_0^{\infty} \ol{H}(y) V'(x+y)dy,  \quad x\in\bbR_+.\qquad
\label{defn-A-MG1-02}
\end{eqnarray}
\end{rem}

In what follows, we assume that $\rho = \lambda/\mu \in (0,1)$, under which $\{X(t)\}$ is positive Harris recurrent with a unique invariant probability measure, denoted by $\pi$, on $\calB(\bbR_+)$. It is known (see, e.g., \cite[Section 5.1.5]{Gros08}) that
\begin{equation}
\pi(dx) = (1 - \rho)\sum_{n=0}^{\infty} H_{\re}^{*n}(dx),
\label{eqn-pi(x)-MG1}
\end{equation}
where $H_{\re}$ is the equilibrium distribution of $H$ and $H_{\re}^{*n}$ is the $n$-fold convolution of itself, i.e.,
\begin{align*}
H_{\re}^{*0}(x) &= 1_{\bbR_+}(x), & x & \in \bbR_+,
\\
H_{\re}^{*1}(x) &= H_{\re}(x) = \mu^{-1}\int_0^x \ol{H}(y) dy, & x & \in \bbR_+,
\\
H_{\re}^{*n}(x) &= \int_0^x H_{\re}^{*(n-1)}(x-y) H_{\re}(dy), & n\ge2,~ x & \in \bbR_+,
\end{align*}
where $\ol{H}= 1 - H$.

\subsection{A bound for the distance between the stationary distributions of the finite and infinite models}

In this subsection, we consider a distance between the stationary distributions $\pi_L$ and $\pi$. To this end, we extend the finite chain $\{X_L(t)\}$ on $[0,L]$ to the infinite space $\bbR_+$, and then modify its transition semigroup $\{P_L^t\}$ in such a way that
\begin{eqnarray}
P_L^{t}V(x) 
&=& [1 - \lambda t H(L-x) ] V( (x - t)^+) 
\nonumber
\\
&& {} \quad
+ \int_0^{L-x} \lambda t H(dy) V(x+y)  + o(t), 
\qquad 0 \le x \le L,
\label{eqn-P^tV(x)-MG1-a}
\\
P_L^{t}V(x) 
&=& (1 - \lambda t) V( (x - t)^+) 
\nonumber
\\
&& {} \quad
+ \int_0^{\infty} \lambda t H(dy) V(x+y)  + o(t), 
\qquad ~~\,x > L.
\label{eqn-P^tV(x)-MG1-b}
\end{eqnarray}

For this modified chain $\{X_L(t)\}$, we denote by $\pi_L$ and $\scA_L$, its invariant probability measure and extended generator, respectively. Note that (\ref{eqn-P^tV(x)-MG1-a}) is the same as (\ref{eqn-P^tV(x)-MG1}) and thus (\ref{defn-A-MG1-finite-02}) still holds for $0 \le x \le L$. Furthermore, (\ref{eqn-P^tV(x)-MG1-infinite}) and (\ref{eqn-P^tV(x)-MG1-b}) show that the modified chain $\{X_L(t)\}$ evolves in the same way as the infinite chain $\{X(t)\}$ while the former is in $(L,\infty)$. Therefore, we have
\begin{eqnarray}
\lefteqn{
\scA_L V(x)
}
\quad &&
\nonumber
\\
&=&
\left\{
\begin{array}{l@{~~~}l}
- 1_{(0,\infty)}(x) V'(x) 
+ \lambda\dm\int_0^{L-x} H(dy) \{V(x+y)-V(x)\},     &  0 \le x \le L,
\\
\scA V(x), & x > L,
\end{array}
\right. \qquad
\label{defn-A-MG1-finite-03}
\end{eqnarray}
where the generator $\scA$ is specified by (\ref{defn-A-MG1}). In addition, (\ref{eqn-P^tV(x)-MG1-a}) implies that $\{X_L(t)\}$ never reaches from $[0,L]$ to any state in $(0,L)$ and thus
\begin{equation*}
\pi_L((L,\infty))  = 0.
\end{equation*}
As a result, the original equilibrium equation (\ref{eqn-pi_L-A_L-V=0}) still holds.

In the above setting, we estimate the difference $\pi - \pi_L$.
Let
\begin{equation*}
\|\pi - \pi_L\|_{\bv{g}} :=
\int_{x \in \bbR_+} \bv{g}(x)|\pi(dx) - \pi_L(dx)|,
\end{equation*}
where $\bv{g}: \bbX \to \bbR_+$ is an arbitrary Borel function belonging to both domains of $\scA$ and $\scA_L$. Let $g$ denote a function $\bbR_+ \to \bbR$ such that, for any $\bbA \in \calB(\bbR_+)$,
\begin{equation*}
g(\bbA) =
\left\{
\begin{array}{r@{~~~}l}
\bv{g}(\bbA),  & \pi(\bbA) - \pi_L(\bbA) \ge 0,
\\
-\bv{g}(\bbA), & \pi(\bbA) - \pi_L(\bbA) < 0,
\end{array}
\right.
\end{equation*}
which yields
\begin{equation}
\|\pi - \pi_L\|_{\bv{g}}
= \braket{ \pi - \pi_L, g}.
\label{eqn-distance-pi-pi_L}
\end{equation}

We now introduce the Poisson equation:
\[
-\scA_L h = g - \braket{\pi_L, g}.
\]
We then define $h_L^{(g)}:\bbR_+ \to \bbR$ as the standard solution of this Poisson equation; that is (see (\ref{Poisson-EQ}) and (\ref{defn-h^{(g)}})),
\begin{equation*}
h_L^{(g)}(x) 
= \EE_x \! \left[ \int_0^{\tau_{L,0}} g(X_L(t)) dt\right]
- \braket{\pi_L, g} \EE_x[\tau_{L,0}],\qquad x \in \bbR_+,
\end{equation*}
where $\tau_{L,0}:=\inf\{t > 0: X_L(t) = 0, X_L(t-) \neq 0\}$. By definition,
\begin{equation}
-\scA_L h_L^{(g)} = g - \braket{\pi_L, g}.
\label{Poisson-EQ-finite}
\end{equation}
Using (\ref{eqn-distance-pi-pi_L}) and (\ref{Poisson-EQ-finite}), we have
\begin{eqnarray}
\|\pi - \pi_L\|_{\bv{g}}
&=& \braket{\pi, g} - \braket{\pi_L, g}
\nonumber
\\
&=& \braket{\pi, g - \braket{\pi_L, g}\cdot 1_{\bbR_+}}
\nonumber
\\
&=& \braket{\pi, -\scA_L h_L^{(g)}}
\nonumber
\\
&=& \braket{\pi, (\scA -\scA_L) h_L^{(g)}},
\label{diff-|pi-pi_L|-00}
\end{eqnarray}
where the last equality holds because $\pi$ satisfies (\ref{eqn-pi*A=0}).

Let $|\scA - \scA_L|$ denote a generator such that
\begin{eqnarray}
|\scA - \scA_L| V(x)
&=& 
\left\{
\begin{array}{l@{~~~}l}
\lambda \dm\int_{L-x}^{\infty} H(dy)\{V(x+y)+V(x)\}, & 0\le x \le L,
\\
0, & x > L.
\end{array}
\right. \qquad
\label{eqn-(scA-scA_L)V_0}
\end{eqnarray}
It then follows from (\ref{defn-A-MG1}), (\ref{defn-A-MG1-finite-03}) and (\ref{eqn-(scA-scA_L)V_0}) that
\begin{equation*}
|(\scA - \scA_L) h_L^{(g)}| 
\le |\scA - \scA_L| h_L^{(g)}
\le |\scA - \scA_L|\, |h_L^{(g)}|.
\end{equation*}
Combining this and (\ref{diff-|pi-pi_L|-00}) results in
\begin{eqnarray}
\|\pi - \pi_L\|_{\bv{g}}
&\le& \braket{\pi,  |(\scA - \scA_L) h_L^{(g)}| }
\nonumber
\\
&\le& \braket{\pi,  |\scA - \scA_L|\, |h_L^{(g)}|}.
\label{diff-|pi-pi_L|}
\end{eqnarray}
Therefore, bounding $|h_L^{(g)}|$, we can obtain a bound for $\|\pi - \pi_L\|_{g}$. 

To achieve this, we assume that the $f$-modulated drift condition (Condition~\ref{cond-drift}) holds for $\bbC = \alpha = \{0\}$, where $V$ is increasing and differentiable (Indeed, we will later construct such $f$-modulated drift conditions in the present setting). It then follows from (\ref{defn-A-MG1}), (\ref{defn-A-MG1-finite-03}) and the increasingness of $V$ that 
\begin{equation}
\scA_L V \le \scA V \le  - f + b 1_{\{0\}}.
\label{ineqn-Qv-finite}
\end{equation}
Corollary~\ref{coro-explicit-bound}, together with (\ref{ineqn-Qv-finite}) and (\ref{eqn-pi(x)-MG1}), yields
\begin{equation}
|h_L^{(g)}| 
\le \left(1 + {b\pi(\{0\}) \over \inf_{y\in \bbX}f(y)} \right) V_0
= \left(1 + {b(1-\rho) \over \inf_{y\in \bbX}f(y)} \right) V_0
\quad \mbox{for all $|g| \le f$}.
\label{explicit-bounds-h_L-general}
\end{equation}

We now substitute (\ref{explicit-bounds-h_L-general}) into (\ref{diff-|pi-pi_L|}), which results in
\begin{eqnarray}
\|\pi - \pi_L\|_{\bv{g}}
&\le& \left(1 + {b(1-\rho) \over \inf_{y\in \bbR_+}f(y)} \right) 
\braket{\pi, (\scA - \scA_L) V_0},\quad 0 \le \bv{g} \le f.
\label{diff-|pi-pi_L|-02}
\end{eqnarray}
From (\ref{defn-V_0}) and (\ref{eqn-(scA-scA_L)V_0}), we also have
\begin{eqnarray*}
|\scA - \scA_L| V_0(x)
&=& 
\left\{
\begin{array}{l@{~~~}l}
\lambda \dm\int_{L-x}^{\infty} H(dy) \{ V_0(x+y) + V_0(x)\}, & 0\le x \le L,
\\
0, & x > L.
\end{array}
\right. \qquad
\end{eqnarray*}
Combining this and (\ref{diff-|pi-pi_L|-02}), and using (\ref{eqn-pi(x)-MG1}), we obtain the following bound: For all $0 \le \bv{g} \le f$,
\begin{eqnarray}
\|\pi - \pi_L\|_{\bv{g}}
&\le& 
\lambda 
\left(
1 + {b(1 - \rho) \over \inf_{y\in \bbR_+}f(y)} 
\right)
\sum_{m=0}^{\infty}(1 - \rho)\rho^m
\nonumber
\\
&& {}
\times  
\int_0^L H_{\re}^{*m}(dx)
\int_{L-x}^{\infty} H(dy) \{ V_0(x+y) + V_0(x) \}. \qquad
\label{bound-pi(g)-pi_L(g)}
\end{eqnarray}

In summary, we can obtain (\ref{explicit-bounds-h_L-general}) and thus (\ref{bound-pi(g)-pi_L(g)}), provided that the $f$-modulated drift condition (\ref{ineqn-Qv-finite}) holds for increasing and differentiable $V$. 
In the next subsection, we construct such drift conditions, and combining them with (\ref{explicit-bounds-h_L-general}), we derive some explicit bounds for $h_L^{(g)}$. Similarly, substituting the specified expressions of $V$ and $f$ into (\ref{bound-pi(g)-pi_L(g)}), we can obtain bounds for $\|\pi - \pi_L\|_{\bv{g}}$. However, those bounds would not be much simpler than the original bound (\ref{bound-pi(g)-pi_L(g)}). Thus, we omit the bounds to save space.

\subsection{Specific bounds for the standard solution of the Poisson equation}

We consider three cases: (i) the asymptotic tail decay of $H$ is light-tailed; (ii)  moderately exponential; and (iii) polynomial. For the three cases, we derive specific bounds (\ref{bound-M-GI-1-01}), (\ref{bound-moderate}) and (\ref{bound-Pareto}) in Sections~\ref{subsec-light-tailed}, \ref{subsec-moderate}, and \ref{subsec-Pareto}, respectively.

\subsubsection{Light-tailed case}\label{subsec-light-tailed}
\hfill

Suppose that $H$ is light-tailed, i.e., (\ref{cond-light-tailed}) holds.   Let $\sigma(\theta)$, $\theta < \ol{\theta}$, denote
\begin{equation}
\sigma(\theta) = -\lambda + \lambda \wh{H}(\theta).
\label{defn-sigma(theta)}
\end{equation}
Clearly, $\sigma(0) = 0$ and 
\[
\sigma'(0) = \lambda
\left. {d \over d\theta}\wh{H}(\theta) \right|_{\theta=0}
= \lambda /\mu = \rho < 1.
\]
Therefore, $\sigma(\theta) < \theta$ for some $\theta > 1$. 

We fix $\theta > 1$ such that $\sigma(\theta) < \theta$. We also fix
\begin{eqnarray}
V(x) &=& e^{\theta x},
\qquad x \in \bbR_+.
\label{eqn-v(x)-M-GI-1}
\end{eqnarray}
It then follows from (\ref{defn-A-MG1}), (\ref{defn-sigma(theta)}) and (\ref{eqn-v(x)-M-GI-1}) that
\begin{align*}
\scA V(0)
&=
-\lambda + \lambda \wh{H}(\theta) 
\nonumber
\\
&= \sigma(\theta)
\nonumber
\\
&= -(\theta - \sigma(\theta)) V(0) + \theta,
\end{align*}
and
\begin{align*}
\scA V(x)
&=
\left[
- \theta -\lambda + \lambda \wh{H}(\theta) 
\right] e^{\theta x}
\nonumber
\\
&= -(\theta - \sigma(\theta)) e^{\theta x}
\nonumber
\\
&= -(\theta - \sigma(\theta)) V(x) < 0, \qquad x > 0.
\end{align*}
These results lead to
\begin{equation*}
\scA V = -(\theta - \sigma(\theta)) V 
+ \theta 1_{\{0\}}.
\end{equation*}
Thus, Condition~\ref{cond-drift} holds with
\begin{eqnarray*}
b = \theta,
\quad
f = (\theta - \sigma(\theta)) V,
\quad
\bbC = \{0\},
\end{eqnarray*}
where $V$ is given in (\ref{eqn-v(x)-M-GI-1}). 
Therefore, (\ref{explicit-bounds-h_L-general}) yields, for $|g| \le (\theta - \sigma(\theta))V$,
\begin{eqnarray}
|h_L^{(g)}(x)|
&\le& 
\left(
1 + {\theta (1-\rho) \over \theta - \sigma(\theta)}
\right)
( e^{\theta x} - 1 ),\qquad x \in \bbR_+,
\label{bound-M-GI-1-01}
\end{eqnarray}
where $\sigma(\theta)$ is given in (\ref{defn-sigma(theta)}).

\subsubsection{Moderately exponential case}\label{subsec-moderate}
\hfill

We assume that, for some $\beta \in (0,1)$, $\gamma > 0$ and $C>0$,
\begin{eqnarray}
\ol{H}(x) 
&\le& C \exp\{-\gamma x^{\beta}\}\quad \mbox{for all $x \in \bbR_+$.}
\label{cond-ol{H}-moderate}
\end{eqnarray}
We then fix
\begin{equation}
V(x) = \exp\{\varepsilon (x+x_0)^{\beta} \},
\qquad x \in \bbR_+,
\label{defn-V-moderate}
\end{equation}
where $\varepsilon \in (0,\gamma)$ and 
\begin{equation}
x_0 \ge \left({1 - \beta \over \varep\beta}\right)^{1/\beta}.
\label{defn-x_0-moderate}
\end{equation}
The constraint (\ref{defn-x_0-moderate}) ensures that
\begin{equation}
V''(x) = \varep \beta (x+x_0)^{\beta-2} 
\left\{ \varep \beta (x+x_0)^{\beta} - (1 - \beta) \right\}V(x) \ge 0,
\qquad x \in\bbR_+.
\label{ineqn-V''>0}
\end{equation}

It follows from (\ref{cond-ol{H}-moderate}) and (\ref{defn-V-moderate}) that (\ref{cond-H}) and thus (\ref{defn-A-MG1-02}) hold (see Remark~\ref{rem-A-MG1}).
Substituting (\ref{defn-V-moderate}) into (\ref{defn-A-MG1-02}) with $x>0$, we have
\begin{eqnarray}
\scA V(x)
&=& - \beta \varep (x+x_0)^{\beta - 1}\exp\{\varepsilon (x+x_0)^{\beta} \}
\nonumber
\\
&&{}
+ \beta \varep\lambda \int_0^{\infty} \ol{H}(y) 
(x+x_0+y)^{\beta - 1}\exp\{\varepsilon (x+x_0+y)^{\beta} \}  dy,
~~ x > 0. \qquad~
\label{eqn-171124-01}
\end{eqnarray}
Note here that the following limit holds (which is proved in Appendix~\ref{proof-eqn-171124-02}): For $x \in \bbR_+$,
\begin{eqnarray}
&&
\lim_{x_0 \to \infty}
\lim_{\varepsilon\to0}
\int_0^{\infty} \ol{H}(y) 
{
(x+x_0+y)^{\beta - 1}\exp\{\varepsilon (x+x_0+y)^{\beta} \} 
\over
(x+x_0)^{\beta - 1}\exp\{\varepsilon (x+x_0)^{\beta} \} 
} dy
\nonumber
\\
&& \qquad 
= \int_0^{\infty} \ol{H}(y)  dy 
= \mu^{-1} < \lambda^{-1},
\label{eqn-171124-02}
\end{eqnarray}
where the last inequality is due to $\lambda / \mu = \rho < 1$.
Therefore, we can fix $x_0>0$, $\varep \in (0,\gamma)$ and $\wt{\rho} \in (\rho,1)$,
\begin{eqnarray}
&&
\lambda \int_0^{\infty} \ol{H}(y) 
(x+x_0+y)^{\beta - 1}\exp\{\varepsilon (x+x_0+y)^{\beta} \}  
 dy
\nonumber
\\
&& \qquad
\le 
\wt{\rho}(x+x_0)^{\beta - 1}\exp\{\varepsilon (x+x_0)^{\beta} \},
\qquad x \in \bbR_+.
\label{eqn-171124-03}
\end{eqnarray}
Applying (\ref{eqn-171124-03}) to the right hand side of (\ref{eqn-171124-01}), we obtain, for $x > 0$,
\begin{eqnarray}
\scA V(x)
&\le& - (1 - \wt{\rho}) 
\beta \varep (x+x_0)^{\beta - 1}\exp\{\varepsilon (x+x_0)^{\beta} \}
= - (1 - \wt{\rho})V'(x).
\label{ineqn-Av(x)-moderate}
\end{eqnarray}

We now fix $\bbC=\alpha = \{0\}$, and fix $b \ge 0$ such that
\begin{eqnarray}
b 
&\ge& (1 - \wt{\rho})V'(0) + \lambda \int_0^{\infty} \ol{H}(y)V'(y) dy. 
\label{ineqn-b-MG1}
\end{eqnarray}
It then follows from (\ref{defn-A-MG1-02}), (\ref{ineqn-Av(x)-moderate}) and (\ref{ineqn-b-MG1}) that
\begin{equation}
\scA V \le - (1 - \wt{\rho})V' + b 1_{\{0\}}.
\label{drift-cond-moderate}
\end{equation}
Thus, letting $f=(1 - \wt{\rho})V'$, and using the increasingness of $V'$ (due to (\ref{ineqn-V''>0})), we have
\[
\inf_{x \in \bbR_+}f(x)
= f(0) 
= (1 - \wt{\rho}) \beta \varep x_0^{\beta-1} \exp\{\varep x_0^{\beta}\}.
\] 
As a result, by (\ref{explicit-bounds-h_L-general}), we obtain, for $|g| \le (1-\rho)V'$,
\begin{eqnarray}
|h_L^{(g)}(x)| 
&\le& \left(
1 + {b(1 - \rho) 
\over (1 - \wt{\rho}) \beta \varep x_0^{\beta-1} \exp\{\varep x_0^{\beta}\}
} 
\right) 
\nonumber
\\
&& {} \times
\left[\exp \{\varep(x+x_0)^{\beta}\} - \exp\{\varep x_0^{\beta}\} \right],
\qquad x \in \bbR_+, 
\label{bound-moderate}
\end{eqnarray}
where $x_0 > 0$, $\varep \in (0,\gamma)$ and $\wt{\rho} \in (\rho,1)$ are constants satisfying (\ref{defn-x_0-moderate}) and (\ref{eqn-171124-03}).

\subsubsection{Polynomial case}\label{subsec-Pareto}

We assume that, for some $\kappa > 1$ and $C>0$,
\begin{equation}
\ol{H}(x) \le C (x+1)^{-\kappa}\quad \mbox{for all $x \in \bbR_+$.}
\label{cond-ol{H}-Pareto}
\end{equation}
We then fix
\begin{equation}
V(x) = (x+x_0)^{\wt{\kappa}},\qquad x \in \bbR_+,
\label{defn-V-Pareto}
\end{equation}
where $\wt{\kappa} \in (1,\kappa)$ and $x_0 \ge 1$. As in Section~\ref{subsec-moderate}, we can use (\ref{defn-A-MG1-02}). Thus, substituting (\ref{defn-V-Pareto}) into (\ref{defn-A-MG1-02}) with $x>0$, we have
\begin{equation}
\scA V(x)
= - \wt{\kappa} (x+x_0)^{\wt{\kappa}-1}
+ \wt{\kappa} \lambda \int_0^{\infty}\ol{H}(y)(x+x_0+y)^{\wt{\kappa}-1},
\qquad x > 0.
\label{eqn-scrA*V-Pareto}
\end{equation}
We also obtain the following limit (which is proved in Appendix~\ref{proof-eqn-171124-05}):
\begin{equation}
\lim_{x_0 \to \infty}
\int_0^{\infty}\ol{H}(y)
{ (x+x_0+y)^{\wt{\kappa}-1} \over (x+x_0)^{\wt{\kappa}-1} }
= \mu^{-1}, \qquad x \in \bbR_+,
\label{eqn-171124-05}
\end{equation}
Therefore, we can fix $x_0 \ge 1$ and $\wt{\rho} \in (\rho,1)$ such that
\begin{equation}
\lambda \int_0^{\infty}\ol{H}(y)
(x+x_0+y)^{\wt{\kappa}-1} \le \wt{\rho}(x+x_0)^{\wt{\kappa}-1}, \qquad x \in \bbR_+.
\label{eqn-171124-06}
\end{equation}
Substituting (\ref{eqn-171124-06}) into (\ref{eqn-scrA*V-Pareto}) yields
\begin{equation*}
\scA V(x)
\le - (1 - \wt{\rho})\wt{\kappa} (x+x_0)^{\wt{\kappa}-1}
=  -(1 - \wt{\rho})V'(x),
\qquad x > 0,
\end{equation*}
which is an inequality of the same type as (\ref{ineqn-Av(x)-moderate}) in Section~\ref{subsec-moderate}. Thus, (\ref{drift-cond-moderate}) holds for $\bbC=\alpha = \{0\}$ and $b \ge 0$ satisfying (\ref{ineqn-b-MG1}). 
Consequently, following the derivation of the bound (\ref{bound-moderate}), we obtain, for $|g| \le (1-\rho)V'$,
\begin{eqnarray}
|h_L^{(g)}(x)| 
&\le& \left(
1 + {b(1 - \rho)
\over (1 - \wt{\rho}) \wt{\kappa} x_0^{\wt{\kappa}-1}
} 
\right) 
\left[(x+x_0)^{\kappa} - x_0^{\kappa} \right],
\qquad x \in \bbR_+,
\label{bound-Pareto}
\end{eqnarray}
where $x_0 \ge 1$ and $\wt{\rho} \in (\rho,1)$ are constants satisfying (\ref{eqn-171124-06}).

\appendix

\section {Proofs}

\subsection{Proof of (\ref{eqn-pi*A=0})}\label{appen-proof-eqn-pi*A=0}

Let $R$ denote a resolvent kernel such that
\[
R(x,\bbA) = \int_0^{\infty} e^{-t} P^t(x,\bbA) dt,
\qquad x \in \bbX,~\bbA\in\calB(\bbX).
\]
It follows from \cite[Eq.~(13) and Lemma~3.1]{Glyn96} that $\scA$ and $R$ are commute, i.e.,
\begin{equation}
\scA R = R \scA.
\label{eqn-AR=RA}
\end{equation}
Let $V = (I - R)^{-1}F$, where $F$ is an arbitrary function in the domain of $\scA$. Clearly, $F = (I-R)V$. Using this together with (\ref{eqn-AR=RA}) and $\pi R = \pi$, we have
\begin{eqnarray*}
\pi \scA F 
&=& \pi \scA (I-R)V
\nonumber
\\
&=& \pi \scA V - \pi \scA R  V
\nonumber
\\
&=& \pi \scA V - \pi R \scA  V
\nonumber
\\
&=& \pi \scA V - \pi \scA  V = 0,
\end{eqnarray*}
which implies that (\ref{eqn-pi*A=0}) holds.

\subsection{Sufficient condition for Condition~\ref{cond-petite}}\label{proof-lem-petite}

The following lemma provides a sufficient condition for Condition~\ref{cond-petite}.
\begin{lem}\label{lem-petite}
Suppose that Conditions~\ref{cond-drift} and \ref{cond-atom} are satisfied. If
the petite set $\bbC$ (which appears in Condition~\ref{cond-drift}) is finite, then Condition~\ref{cond-petite} holds.
\end{lem}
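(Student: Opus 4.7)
The plan is to leverage $\psi$-irreducibility together with the accessibility of the atom $\alpha$ (guaranteed by $\psi(\alpha) > 0$ in Condition~\ref{cond-atom}) to produce, for each individual $x \in \bbC$, some time $t_x > 0$ with $P^{t_x}(x,\alpha) > 0$. The finiteness of $\bbC$ will then allow me to combine these pointwise facts into a single uniform time $T_*$ and a positive uniform lower bound $\xi_{T_*}$.

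First, I would observe that $\psi$-irreducibility applied to the set $\alpha$ (which has $\psi(\alpha) > 0$) yields
\[
\int_0^\infty P^t(x,\alpha)\,dt
= \EE_x\!\left[\int_0^\infty \one(X(t) \in \alpha)\,dt\right] > 0
\qquad \text{for every } x \in \bbX.
\]
Hence, for each $x \in \bbC$, the set $\{t > 0 : P^t(x,\alpha) > 0\}$ has positive Lebesgue measure, so I can pick some $t_x > 0$ with $P^{t_x}(x,\alpha) > 0$. Because $\bbC$ is finite, I then define $T_* := \max_{x\in\bbC} t_x < \infty$.

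Second, I would upgrade the pointwise positivity at time $t_x$ to positivity at the common time $T_*$ via Chapman--Kolmogorov and the atom property. For $x \in \bbC$,
\[
P^{T_*}(x,\alpha)
\ge \int_{\alpha} P^{t_x}(x,dy)\, P^{T_* - t_x}(y,\alpha)
= P^{t_x}(x,\alpha)\, P^{T_* - t_x}(\alpha,\alpha),
\]
where the equality uses that $P^{s}(y,\alpha)$ is independent of $y \in \alpha$ for any $s$. The bound (\ref{indeqn-atom}) from the proof of Lemma~\ref{lem-xi_T} gives $P^{s}(\alpha,\alpha) \ge e^{-c_* s} > 0$ for every $s \ge 0$, so each factor on the right is strictly positive. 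Therefore $P^{T_*}(x,\alpha) > 0$ for every $x \in \bbC$, and setting $\xi_{T_*} := \min_{x \in \bbC} P^{T_*}(x,\alpha)$ gives a strictly positive number (since the minimum is over a finite set) not exceeding $1$. This is exactly Condition~\ref{cond-petite}.

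The main obstacle I anticipate is precisely the step of passing from pointwise positivity, which is essentially immediate from $\psi$-irreducibility, to uniform positivity over $\bbC$; an uncountable or merely small $\bbC$ would give no reason for a common time or a positive infimum to exist. The finiteness assumption plays a dual role here, both in making $T_* = \max_{x \in \bbC} t_x$ finite and in promoting the infimum $\inf_{x \in \bbC} P^{T_*}(x,\alpha)$ to a positive minimum, so neither aspect of Condition~\ref{cond-petite} requires any further structural input beyond what Conditions~\ref{cond-drift} and \ref{cond-atom} already supply.
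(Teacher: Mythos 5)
Your proposal is correct and follows essentially the same route as the paper's own proof: both obtain, for each $x \in \bbC$, a time $t_x$ with $P^{t_x}(x,\alpha) > 0$ from accessibility of $\alpha$ (you spell out the $\psi$-irreducibility step that the paper leaves implicit), both bridge from $t_x$ to a common $T_* = \max_{x\in\bbC} t_x$ via the exponential holding bound on the atom, and both invoke finiteness of $\bbC$ to make $T_*$ finite and to turn the resulting infimum into a positive minimum. Your final step of simply setting $\xi_{T_*} = \min_{x\in\bbC} P^{T_*}(x,\alpha)$ is actually slightly cleaner than the paper's explicit formula for $\xi_T$, which as written carries a stray dependence on $x$ through $t_x$.
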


\begin{proof}
Condition~\ref{cond-atom} shows that the set $\alpha$ is an accessible atom. Therefore, for each $x \in \bbC$, there exist positive numbers $t_x,\varepsilon_x > 0$ such that
\begin{eqnarray*}
\PP(X(t_x) \in \alpha \mid X(0) = x) \ge \varepsilon_x,
\end{eqnarray*}
and thus
\begin{equation}
\PP(X(t_x) \in \alpha \mid X(0) = x) 
\ge \min_{y \in \bbC}\varepsilon_y =: \varepsilon_{\ast} 
\quad \mbox{for all $x \in \bbC$.}
\label{eqn-171101-02}
\end{equation}
Furthermore, by the Markov property, there exists some $c_{\ast} > 0$ such that, for all $t \in \bbR_+$ and $x \in \bbC$,
\begin{eqnarray}
\qquad
\PP(X(u+t_x) \in \alpha, \forall u \in [0,t] \mid X(t_x) \in \alpha) 
= e^{-c_{\ast} t}.
\label{eqn-171101-03}
\end{eqnarray}
Combining (\ref{eqn-171101-02}) and (\ref{eqn-171101-03}) yields
\begin{eqnarray}
\PP(X(t+t_x) \in \alpha \mid X(0) = x)
&\ge& \varepsilon_{\ast} e^{-c_{\ast} t}
\quad \mbox{for all $t \in \bbR_+$ and $x \in \bbC$.}
\label{eqn-171101-04}
\end{eqnarray}

We now fix
\begin{eqnarray*}
T 
&=& \max_{x\in\bbC}t_x,
\qquad
\xi_T
=\varepsilon_{\ast} e^{-c_{\ast}(T-t_x)} > 0.
\end{eqnarray*}
It then follows from (\ref{eqn-171101-04}) that, for all $x \in \bbC$,
\begin{eqnarray*}
\PP(X(T) \in \alpha \mid X(0) = x)
&=& \PP(X(T-t_x + t_x) \in \alpha \mid X(0) = x)
\nonumber
\\
&\ge& \varepsilon_{\ast} e^{-c_{\ast}(T-t_x)}
= \xi_T,
\end{eqnarray*}
which implies that (\ref{cond-petite-01}) holds. \qed
\end{proof}

\subsection{Proof of Theorem~\ref{thm-bound-h}}\label{proof-main-thm-general}

Premultiplying by $\pi$ both sides of (\ref{ineqn-Qv-general}) yields
\[
\braket{\pi,  f} \le b \pi(\bbC),\qquad |g| \le f.
\]
Thus,
\[
|\braket{\pi,g}|   \le 
\braket{\pi,  |g|} \le  
\braket{\pi,  f} \le b \pi(\bbC),\qquad |g| \le f.
\]
Applying this inequality to the first bound (\ref{bound-h-general-stronger}) results in the second one (\ref{bound-h-general-weaker}). Therefore, we prove (\ref{bound-h-general-stronger}).

We first note that
\begin{equation}
\EE_x[\tau_{\alpha}]
\le {1 \over \inf_{y \in \bbX} f(y) }
\EE_x \! \left[
\int_0^{\tau_{\alpha}} f(X(t)) dt
\right].
\label{eqn-180620-01}
\end{equation}
From (\ref{defn-h^{(g)}}) and (\ref{eqn-180620-01}), we have
\begin{eqnarray}
|h^{(g)}(x)| 
&\le& \EE_x \! \left[ \int_0^{\tau_{\alpha}} |g(X(t))| dt\right]
+ |\braket{\pi,  g}| \,\EE_x[\tau_{\alpha}]
\nonumber
\\
&\le& \EE_x \! \left[ \int_0^{\tau_{\alpha}} f(X(t)) dt\right]
+ |\braket{\pi,  g}|\, \EE_x[\tau_{\alpha}]
\nonumber
\\
&\le& \left(1 + { |\braket{\pi,  g}| \over \inf_{y \in \bbX} f(y)}  \right)
\EE_x \! \left[ \int_0^{\tau_{\alpha}} f(X(t)) dt\right],
\qquad x \in \bbX.
\label{ineqn-h^{(g)}}
\end{eqnarray}
From Dynkin's formula (see, e.g., \cite{Davi93}) and (\ref{ineqn-Qv-general}), we also have
\begin{eqnarray*}
\EE_x[V(X(\tau_{\alpha}))]
&=& V(x) + \EE_x \! \left[ \int_0^{\tau_{\alpha}} \scA V(X(t))dt  \right]
\nonumber
\\
&\le& V(x) - \EE_x \! \left[ \int_0^{\tau_{\alpha}} f(X(t))dt  \right]
+ b \EE_x \! \left[ \int_0^{\tau_{\alpha}} 1_{\bbC}(X(t))dt  \right].
\end{eqnarray*}
Using this result together with $\EE_x[V(X(\tau_{\alpha}))] \ge \inf_{y\in\alpha}V(y)$, we obtain
\begin{eqnarray}
\EE_x \! \left[ \int_0^{\tau_{\alpha}} f(X(t))dt  \right]
&\le& V(x) - \inf_{y\in\alpha}V(y) 
 + b \EE_x \! \left[ \int_0^{\tau_{\alpha}} 1_{\bbC}(X(t))dt  \right]
\nonumber
\\
&=& V_0(x) + b \EE_x \! \left[ \int_0^{\tau_{\alpha}} 1_{\bbC}(X(t))dt  \right],\qquad x \in \bbX,
\label{eqn-171026-01}
\end{eqnarray}
where the last equality holds due to $V_0 = V-\inf_{y\in\alpha}V(y)$. Furthermore, substituting (\ref{eqn-171026-01}) into (\ref{ineqn-h^{(g)}}) yields
\begin{eqnarray}
|h^{(g)}(x)| 
&\le&  \left(1 + { |\braket{\pi,  g}| \over \inf_{y \in \bbX} f(y)}  \right)
\nonumber
\\
&& {} \times 
\left(
V_0(x) + b \EE_x \! \left[ \int_0^{\tau_{\alpha}} 1_{\bbC}(X(t))dt  \right]
\right),\qquad x \in \bbX.
\label{ineqn-h^{(g)}-02}
\end{eqnarray}

To complete the proof, we estimate $\EE_x[ \int_0^{\tau_{\alpha}} 1_{\bbC}(X(t))dt]$. Lemma~\ref{lem-xi_T} yields
\begin{equation*}
1_{\bbC}(x)
\le {P^T(x,\alpha) \over \xi_T},
\qquad x \in \bbX,~T \ge T_*.
\end{equation*}
Using this inequality, we obtain
\begin{eqnarray}
\EE_x\! \left[ \int_0^{\tau_{\alpha}} 1_{\bbC}(X(t))dt  \right]
&\le& 
{ 1 \over \xi_T }
\EE_x \!\left[ 
\int_0^{\tau_{\alpha}} P^T(X(t),\alpha) dt
\right]
\nonumber
\\
&=& { 1 \over \xi_T }
\EE_x\! \left[
\int_0^{\tau_{\alpha}} \EE\!\left[ 1_{\alpha}(X(t+T)) \mid X(t)\right]dt
\right]
\nonumber
\\
&=& { 1 \over \xi_T }
\EE_x\! \left[
\int_0^{\tau_{\alpha}} 1_{\alpha}(X(t+T)) dt
\right],
\qquad x \in \bbX. 
\label{ineqn-02}
\end{eqnarray}
Note here that
\begin{align*}
&&&&
\int_0^{\tau_{\alpha}} 1_{\alpha}(X(t+T)) dt &\le 
\int_0^T 1_{\alpha}(X(t+T)) dt \le T, & 
&\mbox{if $T \ge \tau_{\alpha}$},
&&&&
\\
&&&&
\int_0^{\tau_{\alpha}} 1_{\alpha}(X(t+T)) dt &
= \int_{\tau_{\alpha}}^{\tau_{\alpha} + T} 1_{\alpha}(X(t)) dt \le T, & 
&\mbox{if $T < \tau_{\alpha}$},
&&&&
\end{align*}
which lead to
\[
\EE_x\! \left[
\int_0^{\tau_{\alpha}} 1_{\alpha}(X(t+T)) dt
\right] \le T.
\]
Thus, from (\ref{ineqn-02}), we have
\begin{eqnarray}
\EE_x\! \left[ \int_0^{\tau_{\alpha}} 1_{\bbC}(X(t))dt  \right]
\le { T \over \xi_T },
\qquad x \in \bbX. 
\label{ineqn-03}
\end{eqnarray}
Substituting (\ref{ineqn-03}) into (\ref{ineqn-h^{(g)}-02}) results in (\ref{bound-h-general-stronger}). 

\subsection{Proof of (\ref{eqn-171124-02})}\label{proof-eqn-171124-02}

Since $0 < \beta < 1$, the following holds for $x \in \bbR_+$, $x_0 > 0$ and $\varep \in (0,\gamma)$:
\begin{eqnarray}
&&
\int_0^{\infty}\ol{H}(y)
{
(x+x_0+y)^{\beta - 1}\exp\{\varepsilon (x+x_0+y)^{\beta} \} 
\over
(x+x_0)^{\beta - 1}\exp\{\varepsilon (x+x_0)^{\beta} \} 
} dy
\nonumber
\\
&& \qquad 
\le 
\int_0^{\infty}\ol{H}(y)
{
\exp\{\varepsilon (x+x_0+y)^{\beta} \} 
\over
\exp\{\varepsilon (x+x_0)^{\beta} \} 
} dy
\nonumber
\\
&& \qquad
= 
\int_0^{\infty}\ol{H}(y)
\exp\left\{
\varepsilon (x+x_0)^{\beta}
\left[
\left( 1 + {y \over x+x_0} \right)^{\beta} - 1
\right]
\right\} dy.
\label{eqn-171124-04}
\end{eqnarray}
Furthermore, for any $0 < \beta < 1$,
\[
\left( 1 + {y \over x+x_0} \right)^{\beta} 
\le 1 + \left(  {y \over x+x_0} \right)^{\beta},
\qquad x,y \in\bbR_+,~x_0 > 0.
\]
Applying this to the right hand side of (\ref{eqn-171124-04}) yields
\begin{eqnarray*}
&&
\int_0^{\infty}\ol{H}(y)
{
(x+x_0+y)^{\beta - 1}\exp\{\varepsilon (x+x_0+y)^{\beta} \} 
\over
(x+x_0)^{\beta - 1}\exp\{\varepsilon (x+x_0)^{\beta} \} 
} dy
\nonumber
\\
&& \qquad 
\le
\int_0^{\infty}\ol{H}(y)\exp\{ \varepsilon y^{\beta} \} dy
\nonumber
\\
&& \qquad 
\le
C\int_0^{\infty}\exp\{ -(\gamma - \varep) y^{\beta} \} dy < \infty
\quad \mbox{for all $\varep \in (0,\gamma)$ and $x_0 > 0$},
\end{eqnarray*}
where the second inequality holds due to (\ref{cond-ol{H}-moderate}). Therefore, by the dominated convergence theorem, we obtain 
\begin{eqnarray*}
&&
\lim_{x_0 \to \infty}
\lim_{\varep \to 0}
\int_0^{\infty}\ol{H}(y)
{
(x+x_0+y)^{\beta - 1}\exp\{\varepsilon (x+x_0+y)^{\beta} \} 
\over
(x+x_0)^{\beta - 1}\exp\{\varepsilon (x+x_0)^{\beta} \} 
} dy
\nonumber
\\
&& \quad
= 
\int_0^{\infty}\ol{H}(y)
\lim_{x_0 \to \infty}
\lim_{\varep \to 0}
\left[
{
(x+x_0+y)^{\beta - 1}
\over
(x+x_0)^{\beta - 1}
} 
{
\exp\{\varepsilon (x+x_0+y)^{\beta} \} 
\over
\exp\{\varepsilon (x+x_0)^{\beta} \} 
} 
\right]dy
\nonumber
\\
&& \quad
= 
\int_0^{\infty}\ol{H}(y)dy
=\mu^{-1},
\end{eqnarray*}
which shows that (\ref{eqn-171124-02}) holds.

\subsection{Proof of (\ref{eqn-171124-05})}\label{proof-eqn-171124-05}

It follows from (\ref{cond-ol{H}-Pareto}), $1 < \wt{\kappa} < \kappa$ and $x_0 > 1$ that, for $x \in \bbR_+$, 
\begin{eqnarray*}
&&
\int_0^{\infty} \ol{H}(y)
{ (x + x_0 + y)^{\wt{\kappa}-1} \over (x + x_0)^{\wt{\kappa}-1} } dy
\nonumber
\\
&& \qquad
\le C\int_0^{x_0} (y+1)^{-\kappa}
\left( 1 + { y \over x + x_0} \right)^{\wt{\kappa}-1} dy
\nonumber
\\
&& \qquad~~~~ {}
+ C\int_{x_0}^{\infty} (y+1)^{-\kappa}
\left( 1 + { y \over x + x_0} \right)^{\wt{\kappa}-1} dy
\nonumber
\\
&& \qquad
\le C\int_0^{x_0} (y+1)^{-\kappa} \cdot 2^{\wt{\kappa} - 1} dy
+ C\int_{x_0}^{\infty} (y+1)^{-\kappa}
\left( 1 + y \right)^{\wt{\kappa}-1} dy
\nonumber
\\
&& \qquad
\le 2^{\wt{\kappa} - 1}C \int_0^{x_0} (y+1)^{-\kappa} dy
+   2^{\wt{\kappa} - 1}C \int_{x_0}^{\infty} (y+1)^{-\kappa+\wt{\kappa}-1} dy
\nonumber
\\
&& \qquad
\le  
2^{\wt{\kappa} - 1}C
\int_0^{\infty} (y+1)^{-\kappa+\wt{\kappa}-1}  dy
= {2^{\wt{\kappa} - 1}C \over \kappa - \wt{\kappa}} < \infty.
\end{eqnarray*}
Therefore, by the dominated convergence theorem, we have
\begin{eqnarray*}
\lim_{x_0\to\infty}
\int_0^{\infty} \ol{H}(y)
{ (x + x_0 + y)^{\wt{\kappa}-1} \over (x + x_0)^{\wt{\kappa}-1} } dy
&=& \int_0^{\infty} \ol{H}(y)\lim_{x_0\to\infty}
\left( { x + x_0 + y \over x + x_0} \right)^{\wt{\kappa}-1} dy
\nonumber
\\
&=& \int_0^{\infty} \ol{H}(y) = \mu^{-1},
\end{eqnarray*}
which shows that (\ref{eqn-171124-05}) holds.


\section*{Acknowledgments}
This research was supported in part by JSPS KAKENHI Grant Number JP18K11181.

%
%
%
\bibliographystyle{plain} 


\end{document}